\documentclass[10pt]{article}
\textwidth= 5.00in
\textheight= 7.4in
\topmargin = 30pt
\evensidemargin=0pt
\oddsidemargin=55pt
\headsep=17pt
\parskip=.5pt
\parindent=12pt

\usepackage[english]{babel}
\usepackage{amssymb,amsthm,amsmath,amsfonts,mathrsfs}
\usepackage{pst-all,pst-3dplot,pstricks,pstricks-add,pst-math,pst-xkey}
\usepackage{graphicx}
\usepackage{latexsym}
\usepackage{ifthen, bigstrut}
\usepackage{accents}
\usepackage[english]{babel}
\usepackage{hyperref}
\usepackage{breakurl}
\usepackage{stmaryrd}
\usepackage{ifthen, bigstrut, amssymb, amsthm, amsmath}

\usepackage{bbold}
\usepackage{enumerate}
\usepackage{epsfig}
\usepackage{subfigure}
\usepackage{skak}
\usepackage{chessboard}
\usepackage{chessfss}
\usepackage{pst-all}
\usepackage{lscape}
\usepackage{courier}
\usepackage[all,cmtip]{xy}

\newtheorem{theorem}{Theorem}

\newtheorem{corollary}[theorem]{Corollary}
\newtheorem{lemma}[theorem]{Lemma}

\newcommand{\N}{\mathcal{N}}
\renewcommand{\P}{\mathcal{P}}
\renewcommand{\L}{\mathcal{L}}
\newcommand{\R}{\mathcal{R}}

\newcommand{\U}{\mathcal{U}}

\newcommand{\GL}{{G^\mathcal{L}}}
\newcommand{\GR}{{G^\mathcal{R}}}

\renewcommand{\ge}{\geqslant}

\newcommand{\su}{\succcurlyeq}
\newcommand{\pr}{\preccurlyeq}

\theoremstyle{definition}
\newtheorem{definition}[theorem]{Definition}

\newtheorem{observation}[theorem]{Observation}

%--------REPLACES COMBGAMES PACKAGE----------------------------------
%\DeclareMathOperator{\cggfuz}{\rule[-.05ex]{.05ex}{1.3ex}\hspace{.8pt}\rhd}
%\DeclareMathOperator{\cglfuz}{\lhd\hspace{.8pt}\rule[-.05ex]{.05ex}{1.3ex}}
%\DeclareMathOperator{\cgfuzzy}{\|}

%

%----------------------------------------------------------------------

%\newcommand\bibsection{%
%\section*{References\markright{\MakeUppercase{References}}}}

\begin{document}

\title{{\sc invertible elements of the\\ mis\`{e}re dicotic universe}}
\maketitle

\vspace{-0.4cm}
\begin{center}
  {\sc Michael Fisher}\textsuperscript{1},
    {\sc Richard Nowakowski}\textsuperscript{2},\\
      {\sc Carlos Pereira dos Santos}\textsuperscript{3}
  \par \bigskip

  \textsuperscript{1}West Chester University, \href{mailto:mfisher@wcupa.edu}{mfisher@wcupa.edu} \par
          \textsuperscript{2}Dalhousie University, \href{mailto:r.nowakowski@dal.ca}{r.nowakowski@dal.ca} \par
    \textsuperscript{3}Center for Functional Analysis, Linear Structures and Applications,\\ University of Lisbon \& ISEL--IPL, \href{mailto:cmfsantos@fc.ul.pt}{cmfsantos@fc.ul.pt}
\end{center}

\begin{abstract}
We present a characterization of the invertible elements of the mis\`{e}re\\ dicotic universe.
\end{abstract}

\section{Introduction}

Combinatorial game theory studies perfect information games in which there are no chance devices (e.g.\ dice) and two players take turns moving alternately.\linebreak Using standard notation, where Left (female) and Right (male) are the players, a\linebreak position is written in the form $G=\{\GL \! \mid \! \GR \}$, where $\GL=\{G^{L_1},G^{L_2},\ldots\}$ is the set of left options from $G$ and $G^{L}$ is a particular left option (and the same for $\GR$ and $G^R$). Normal-play convention states that the last player able to move is the winner; by contrast, mis\`{e}re-play convention states that the last player is the loser. Often, games decompose into components during the play, and, in those situations, a player has to choose a component in which to play -- that motivates the concept of \emph{disjunctive} sum. Also, sometimes, replacing a component $H$ by a component $G$ never hurts a player, no matter what the context is; it is also possible to have a component $G$ acting like $H$ in any context -- these situations motivate a\emph{ partial order} and an \emph{equality} in the structure of games. Disjunctive sum, partial order and equality will be better explained in the next subsection. In a \emph{dicotic} game $G$, every sub-position $G'$ has the property ${G'}^{\mathcal{L}}=\emptyset$ iff ${G'}^{\mathcal{R}}=\emptyset$. Here we are concerned with short dicotic games under mis\`{e}re-play convention (short games are games with finitely many distinct subpositions and no infinite run).\\

\noindent
Normal-play convention is a very special case. Combinatorial games played with normal-play convention, together with the
disjunctive sum, constitutes a group structure \cite{AlberNW2007,BerleCG1982,Con1976,Siegel2013}. The inverse of $G$ is its \emph{conjugate}, obtained recursively by $\sim G=\{\sim G^\mathcal{R}\,|\,\sim G^\mathcal{L}\}$. To check if $G\succcurlyeq H\Leftrightarrow G+(\sim H)\succcurlyeq0$, it is only needed to play $G+(\sim H)$ and see if Left wins going
second. Also, in game practice, a component $G+(\sim G)$ can be removed from the analysis as it behaves like an empty zone of the board. These facts show the importance of invertibility. Regarding normal-play convention, all components are invertible.\\

\noindent
$\sim G$ is obtained from the game $G$ by reversing the roles of Left and Right -- ``turning the board around or switching colours''. In normal play, $G+(\sim G)=0$ which
is decidedly not true in mis\`{e}re-play. Even kowing that, for ease, from now on, we will write $-G$ instead of $\sim G$.\\

\noindent
 Allen asks, regarding the mis\`{e}re dicot structure, when it is true that $G-G=0$ (for\linebreak example, $*+*= 0$) \cite{Nowakowski2019}. McKay, Milley and Nowakowski show that this is true if $G'-G'$ is a next player win for every $G'$ subposition of $G$ (including $G$) \cite{McKay2016}.\\

\noindent
In this paper, we prove the converse implication (much harder to prove),\linebreak establishing that, being $G$ a dicot canonical form, $G$ is invertible if and only if there is no $G'$, subposition of $G$, such that $G'-G'$ is a previous player win.\linebreak Therefore,\textbf{ the main result of this paper is a complete characterization of the invertible elements of the mis\`{e}re dicot structure}.
This document is self\linebreak contained; see \cite{AlberNW2007,BerleCG1982,Con1976,Siegel2013} for more information. Readers fluent in Combinatorial Game Theory may wish to proceed to Section \ref{sec2}.\\

\vspace{-0.5cm}
\subsection{Background on relevant combinatorial game theory}

\noindent
Considering $G=\{\GL \! \mid \! \GR \}$, any position which can be reached from $G$ is called a {\em follower} of $G$ ($G$ itself is a follower of $G$). The possible {\em outcomes} of a position are \linebreak $\mathscr{L}>(\mathscr{P}\|\mathscr{N})>\mathscr{R}$: $\mathscr{L}$eft wins, regardless of moving first or second; $\mathscr{R}$ight wins, regardless of moving first or second; $\mathscr{N}$ext player wins regardless of whether this is Left or Right; $\mathscr{P}$revious player wins regardless of whether this is Left or Right.  The outcome function $o(G)$ will be used to denote the outcome of $G$. The {\em outcome classes} $\L, \N, \R, \P$ are the sets of all games with the indicated outcome, so that we can write $G\in \L$ when $o(G)=\mathscr{L}$. \\

\noindent
Often, games decompose into components during the play. For those situations, the disjunctive sum is formalized: $G+H=\{G^\mathcal{L}+H,G+H^\mathcal{L}\,|\,G^\mathcal{R}+H,G+H^\mathcal{R}\}$.\\

\noindent
The relations {\em inequality} and {\em equivalence} of games are  defined by
$$G \succcurlyeq H  \textrm{ if and only if } o(G+X)\geqslant o(H+X) \textrm{ for all games } X;$$
$$G\equiv H \textrm{ if and only if } o(G+X)= o(H+X) \textrm{ for all games } X.$$
The first means that replacing $H$ by $G$ can never hurt Left, no matter what the context is; the second means that $G$ acts like $H$ in any context.  In this paper, for ease, we always use the symbol $=$; different situations determine if the symbol is being used for games or outcomes. Also, we are using the same symbols for different game conventions. \\

\subsubsection{Mis\`{e}re-play: dicotic forms}

\noindent
Among many other things, the fact that mis\`ere structures lose the group structure makes general mis\`ere analysis very difficult --- \cite{MilleyRenault2017} for a survey. A breakthrough in the study of mis\`ere games occurred when Plambeck and Siegel (\cite{Plamb2005, PlambS2008}) suggested weakened  equality and inequality relations in order to compare games only within a particular universe, being the concept of universe defined in the following way:

\begin{definition} \label{def:universe}
A {\em universe} is a  class  of positions satisfying the following properties:
	\begin{enumerate}
	\item options closure: if $G\in \U$ and $G'$ is an option of $G$ then $G'\in \U$;
	\item disjunctive sum closure: if $G,H\in \U$ then $G+H\in \U$;
	\item conjugate closure: if $G\in \U$ then $-G\in \U.$
	\end{enumerate}
\end{definition}

\noindent
With this concept, it is possible to say that two dicotic positions are equivalent ``modulo dicots'', even if they are different in the full  mis\`ere structure. The \textit{restricted} relations are defined below.

\begin{definition}
\label{def:eq} \cite{PlambS2008}
For a universe $\mathcal{U}$ and games $G,H$, the terms
{\em equivalence} and {\em inequality}, {\em modulo} $\mathcal{U}$, are  defined by
$$G=_{\U} H \textrm{ if and only if } o(G+X)= o(H+X) \textrm{ for all games } X \in \mathcal{U},$$
$$G \succcurlyeq_{\U} H  \textrm{ if and only if } o(G+X)\geqslant o(H+X) \textrm{ for all games } X \in \mathcal{U}.$$
\end{definition}

\noindent
Recently, some advances have been made with regard to the mis\`ere dicotic universe \cite{Dorb2013,LarssonNS2016A,LarssonNPS}, as far as unsubordinated comparison (comparison of $G$ with $H$ only with the forms $G$ and $H$), canonical forms, and conjugate property are concerned, which are presented below.  First, unsubordinated comparison for mis\`ere dicotic universe ($\mathcal{D}^-$):

\begin{theorem}[Unsubordinated order of dicotic mis\`{e}re universe] \label{unsobordinatedorder} $G\succcurlyeq_{\mathcal{D}^-} H$ iff\\

\vspace{-0.15cm}
\noindent
Proviso: $o(G)\geqslant o(H).$\\

\vspace{-0.15cm}
\noindent
Common Normal Part:
\begin{enumerate}[]
  \item For all $G^R$, there is $H^R$ such that $G^R\succcurlyeq_{\mathcal{D}^-} H^R$ or there is $G^{RL}$ such that \linebreak$G^{RL}\succcurlyeq_{\mathcal{D}^-} H$.\\

  \vspace{-0.6cm}
  \item For all $H^L$, there is $G^L$ such that $G^L\succcurlyeq_{\mathcal{D}^-} H^L$ or there is $H^{LR}$ such that\linebreak $G\succcurlyeq_{\mathcal{D}^-} H^{LR}$.
\end{enumerate}
\end{theorem}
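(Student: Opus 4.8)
The plan is to prove the biconditional in the two usual directions, but first to reduce the outcome inequality to something purely combinatorial. Since the four misère outcomes are precisely the four value combinations of the two Boolean quantities ``Left wins moving first'' and ``Left wins moving second'', the partial order $\mathscr{L}>(\mathscr{P}\|\mathscr{N})>\mathscr{R}$ is exactly the product order on these two bits. Hence $o(G+X)\geqslant o(H+X)$ holds if and only if (a) whenever Left wins $H+X$ moving second she wins $G+X$ moving second, and (b) whenever Left wins $H+X$ moving first she wins $G+X$ moving first. I would establish the whole theorem against this reformulation, so that ``maintaining the inequality'' becomes ``Left can preserve each of these two winning abilities''.

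For the sufficiency direction (Proviso and Common Normal Part imply $G\succcurlyeq_{\mathcal{D}^-}H$) I would argue by induction on the birthday of the test game $X$, the base case $X=0$ being exactly the Proviso $o(G)\geqslant o(H)$. For the inductive step, to show (a) I examine Right's first move in $G+X$. If Right moves inside $X$ to $G+X^R$, the inductive hypothesis gives $o(G+X^R)\geqslant o(H+X^R)$ and transfers Left's winning response. If Right moves $G\to G^R$, I invoke Common Normal Part item~1: either some $H^R$ has $G^R\succcurlyeq_{\mathcal{D}^-}H^R$, in which case Left's winning reply to the corresponding move in $H+X$ carries over because $o(G^R+X)\geqslant o(H^R+X)$; or some $G^{RL}$ has $G^{RL}\succcurlyeq_{\mathcal{D}^-}H$, in which case Left reverses by playing $G^R\to G^{RL}$ and uses $o(G^{RL}+X)\geqslant o(H+X)$ to inherit Left's second-player win in $H+X$. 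The argument for (b) is the mirror image, driven by Common Normal Part item~2: for Left's winning move $H\to H^L$ in $H+X$, either a dominating $G^L$ lets Left imitate, or a reversing $H^{LR}$ with $G\succcurlyeq_{\mathcal{D}^-}H^{LR}$ delivers the first-player win directly, since Left beating $H^{LR}+X$ moving first plus $o(G+X)\geqslant o(H^{LR}+X)$ forces Left to win $G+X$ moving first. The two-ply look-ahead in $G^{RL}$ and $H^{LR}$ is exactly what repairs misère reversibility, and dicoticity guarantees these deeper options exist whenever the shallow ones do.

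For the necessity direction I would take $X=0\in\mathcal{D}^-$ to recover the Proviso immediately, and then prove each Common Normal Part condition by contraposition. Suppose item~1 fails for some $G^R$: no $H^R$ satisfies $G^R\succcurlyeq_{\mathcal{D}^-}H^R$ and no $G^{RL}$ satisfies $G^{RL}\succcurlyeq_{\mathcal{D}^-}H$. Each failed comparison yields, by definition, a witnessing game in $\mathcal{D}^-$ on which the relevant outcome inequality breaks. The goal is to fuse all of these witnesses into a single $X\in\mathcal{D}^-$ for which $o(G+X)\not\geqslant o(H+X)$, contradicting the hypothesis $G\succcurlyeq_{\mathcal{D}^-}H$. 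I would build $X$ so that Right is compelled to enter the problematic line $G\to G^R$ while being denied any profitable match or reversal, assembling it from the individual distinguishers together with conjugate components and dicotic ``control'' gadgets that steer the play and keep the sum inside $\mathcal{D}^-$; item~2 is handled symmetrically using the conjugate structure.

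The main obstacle is precisely this separating construction in the necessity half. Unlike the normal-play situation, a naive disjunctive sum of distinguishers need not distinguish, and the last-player-loses convention makes terminal and near-terminal positions delicate, so the construction must simultaneously (i) force Right's entry into the $G^R$ branch, (ii) neutralise every candidate answer $H^R$ and every candidate reversal $G^{RL}$ at once, (iii) remain dicotic, and (iv) produce a genuine strict drop in outcome. I expect to handle this either by extracting a single ``worst'' distinguisher and amplifying it with a controlling gadget, or by exhibiting a canonical minimal separating game and then verifying membership in $\mathcal{D}^-$ and computing $o(G+X)$ against $o(H+X)$ by direct misère bookkeeping; getting the interaction between the Proviso, the dicotic terminal positions, and the forced line correct is the crux of the whole proof.
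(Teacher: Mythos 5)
You should first note a point of reference: this paper does not prove Theorem~\ref{unsobordinatedorder} at all --- it is quoted as background and attributed to \cite{Dorb2013,LarssonNS2016A,LarssonNPS} --- so your attempt can only be measured against the proofs in that literature. Your sufficiency half (Proviso + Common Normal Part implies $G\succcurlyeq_{\mathcal{D}^-}H$) is essentially the standard argument and is sound: the two-bit reformulation of the outcome order is correct ($\mathscr{L}=(1,1)$, $\mathscr{N}=(1,0)$, $\mathscr{P}=(0,1)$, $\mathscr{R}=(0,0)$ under the product order), and the induction on the birthday of $X$ with the case split (Right moves in $X$, handled by induction; Right moves to $G^R$ matched by some $H^R$; Right moves to $G^R$ reversed through some $G^{RL}$) is how the published proofs run. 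One detail you must still make explicit: in mis\`ere play the player unable to move \emph{wins}, so the argument has to dispose of the terminal case in which Right has no move at all in $G+X$; it is exactly dicoticity that saves you, since a dicotic $X$ with no Right option is $0$ and this case collapses to the Proviso.

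The genuine gap is the necessity direction. Taking $X=0$ to extract the Proviso is fine, but for the Common Normal Part you only state a goal (``fuse all of these witnesses into a single $X$\ldots I expect to handle this either by\ldots or by\ldots'') without exhibiting the construction, and you yourself identify this as the crux. Worse, the mechanism you gesture at --- disjunctive sums of individual distinguishers amplified by ``control'' gadgets --- is not how the known proof works and is unlikely to be repairable in that form, for precisely the reason you concede: in a monoid without inverses, a sum of distinguishers need not distinguish. The proofs in \cite{LarssonNS2016A,LarssonNPS} instead exploit the \emph{parental} closure of the dicotic universe (any two nonempty finite sets of dicots can be taken as the Left and Right option sets of a new dicot): the witnesses to the failed comparisons are installed as \emph{options} of a single bespoke game $X$, together with adjoint components $(\cdot)^\circ$ whose role is to dictate the outcomes of subpositions, so that the alternation of play itself performs the case analysis --- Right is steered into the problematic $G^R$, and every candidate answer $H^R$ and every candidate reversal $G^{RL}$ runs into the option of $X$ that was planted to defeat it. Without that options-based construction, the right-to-left implication remains unproven, so your proposal as it stands establishes only half of the theorem.
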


\noindent
As usual, from now on, due to the fact that we are only concerned with dicotic mis\`{e}re universe, we will use $=$ and $\succcurlyeq$ instead of $=_{\mathcal{D}^-}$ and $\succcurlyeq_{\mathcal{D}^-}$.

\begin{corollary} \label{ettinger}$G\succcurlyeq 0$ iff $o(G)\geqslant \mathcal{N}$ and for all $G^R$ there is $G^{RL}\succcurlyeq 0$.
\end{corollary}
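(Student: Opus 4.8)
The plan is to obtain this as an immediate specialization of Theorem~\ref{unsobordinatedorder} to the case $H=0$, where $0$ denotes the empty position with $0^{\mathcal{L}}=\emptyset$ and $0^{\mathcal{R}}=\emptyset$. First I would record the misère value of the empty position: since the player to move in $0$ cannot move and, under misère convention, a player unable to move wins, we have $o(0)=\mathcal{N}$. Substituting $H=0$ into the Proviso $o(G)\geqslant o(H)$ then yields exactly the first desired condition, $o(G)\geqslant\mathcal{N}$.

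Next I would unwind the two clauses of the Common Normal Part with $H=0$. In the first clause, the disjunct ``there is $H^R$ such that $G^R\succcurlyeq H^R$'' cannot be invoked, because $0^{\mathcal{R}}=\emptyset$ means there is no $H^R$ to choose; hence the clause collapses to its second disjunct, namely that for every $G^R$ there is $G^{RL}$ with $G^{RL}\succcurlyeq 0$. This is precisely the second condition in the statement. In the second clause, the universally quantified ``for all $H^L$'' ranges over $0^{\mathcal{L}}=\emptyset$ and is therefore vacuously satisfied, contributing no constraint. Conjoining the Proviso with the surviving part of the first clause gives the claimed equivalence, and since Theorem~\ref{unsobordinatedorder} is itself an ``iff'', no separate treatment of the two directions is needed.

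I expect essentially no serious obstacle here, as the corollary is a direct reading of the theorem once $H$ is set to $0$. The only two points that genuinely require care are the correct misère evaluation $o(0)=\mathcal{N}$ (which differs from the normal-play value) and the bookkeeping of the two empty option sets: one empty set makes a disjunct unavailable and thereby forces the ``moving-through'' condition on the $G^{RL}$, while the other makes an entire universal clause vacuous. Getting these two vacuities right is the whole substance of the argument.
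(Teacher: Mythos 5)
Your proposal is correct and matches the paper's intended derivation exactly: the corollary is stated without proof precisely because it is the specialization of Theorem~\ref{unsobordinatedorder} to $H=0$, with the misère evaluation $o(0)=\mathcal{N}$ and the two vacuity observations (no $H^R$ available, the ``for all $H^L$'' clause empty) handled just as you describe.
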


\begin{corollary} \label{ettinger2}$G=0$ iff $o(G)=\mathcal{N}$, for all $G^R$ there is $G^{RL}\succcurlyeq 0$, and for all $G^L$ there is $G^{LR}\preccurlyeq 0$.
\end{corollary}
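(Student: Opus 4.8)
The plan is to deduce Corollary~\ref{ettinger2} from Corollary~\ref{ettinger} together with a conjugate (``Right-handed'') analogue of the latter, using the fact that $G=0$ precisely when $G\succcurlyeq 0$ and $G\preccurlyeq 0$ (antisymmetry of the restricted order, immediate from Definition~\ref{def:eq}). Corollary~\ref{ettinger} already characterizes $G\succcurlyeq 0$, so the only real task is to establish the dual statement
$$G\preccurlyeq 0 \textrm{ iff } o(G)\leqslant \mathcal{N} \textrm{ and for all } G^L \textrm{ there is } G^{LR}\preccurlyeq 0,$$
and then to intersect the two sets of conditions.

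To obtain the dual I would invoke the conjugate property available in $\mathcal{D}^-$, namely that $G\preccurlyeq 0$ if and only if $-G\succcurlyeq 0$ (here $-0=0$). Applying Corollary~\ref{ettinger} to $-G$ gives $-G\succcurlyeq 0$ iff $o(-G)\geqslant \mathcal{N}$ and for every right option $(-G)^R$ there is a $(-G)^{RL}\succcurlyeq 0$. I then translate each ingredient back to $G$ through the recursion $-G=\{-G^R\mid -G^L\}$: a right option of $-G$ is exactly some $-G^L$, and the left options of such a $-G^L$ are exactly the $-G^{LR}$. Hence the quantified condition reads ``for all $G^L$ there is $G^{LR}$ with $-G^{LR}\succcurlyeq 0$'', which by the conjugate property applied once more at this level is ``for all $G^L$ there is $G^{LR}\preccurlyeq 0$''. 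For the outcome, conjugation acts as the order-reversing involution on the outcome poset that fixes $\mathscr{P},\mathscr{N}$ and swaps $\mathscr{L},\mathscr{R}$; since it reverses order and fixes $\mathscr{N}$, the condition $o(-G)\geqslant \mathcal{N}$ is equivalent to $o(G)\leqslant \mathcal{N}$. This establishes the displayed dual.

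Finally I would combine the two characterizations. Since $G=0$ iff $G\succcurlyeq 0$ and $G\preccurlyeq 0$, all four conditions must hold at once: $o(G)\geqslant \mathcal{N}$, $o(G)\leqslant \mathcal{N}$, for all $G^R$ there is $G^{RL}\succcurlyeq 0$, and for all $G^L$ there is $G^{LR}\preccurlyeq 0$. In the poset $\mathscr{L}>(\mathscr{P}\|\mathscr{N})>\mathscr{R}$, the two outcome conditions force $o(G)=\mathcal{N}$, because $\mathscr{N}$ is the unique outcome lying both weakly above and weakly below $\mathscr{N}$ (the outcome $\mathscr{P}$, being incomparable with $\mathscr{N}$, satisfies neither inequality, while $\mathscr{L}$ and $\mathscr{R}$ each satisfy only one). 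Substituting this merged outcome condition reproduces exactly the statement of Corollary~\ref{ettinger2}.

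The main obstacle, modest as it is, lies in the bookkeeping of the conjugation step: one must correctly read off that a right option of $-G$ is $-G^L$ and that its left options are the $-G^{LR}$, and then apply the equivalence $-X\succcurlyeq 0\Leftrightarrow X\preccurlyeq 0$ a second time at the level of these second-generation options. The only external inputs are the conjugate property and the behaviour of outcomes under conjugation, both part of the established theory of $\mathcal{D}^-$ cited above, so no new machinery is needed.
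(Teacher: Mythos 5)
Your proof is correct, but it takes a different route from the one the paper intends. The paper treats Corollary~\ref{ettinger2} as an immediate specialization of Theorem~\ref{unsobordinatedorder} applied \emph{twice directly}: once with $H=0$ (giving Corollary~\ref{ettinger}, since $0$ has no options and $o(0)=\mathcal{N}$), and once with the roles swapped, i.e.\ to $0\succcurlyeq G$, where the proviso becomes $\mathcal{N}\geqslant o(G)$ and the common normal part degenerates to ``for all $G^L$ there is $G^{LR}$ with $0\succcurlyeq G^{LR}$'' (the disjunct ``there is $0^L$ with $0^L\succcurlyeq G^L$'' is vacuously unavailable, and the clause quantified over $0^R$ is vacuously true). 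Intersecting the two characterizations then forces $o(G)=\mathcal{N}$ exactly as in your final step. You instead derive the dual characterization of $G\preccurlyeq 0$ by conjugating: $G\preccurlyeq 0\Leftrightarrow -G\succcurlyeq 0$, applying Corollary~\ref{ettinger} to $-G$, and translating options back. This is valid --- the dicotic universe is conjugate-closed (Definition~\ref{def:universe}), conjugation reverses the restricted order, and your bookkeeping of $(-G)^R=-G^L$ and $(-G)^{RL}=-G^{LR}$ is right --- but note that the symmetry $G\preccurlyeq 0\Leftrightarrow -G\succcurlyeq 0$ is \emph{not} among the results quoted in the paper; the paper's ``conjugate property'' (Theorem~\ref{Conjugate}) is the different statement that $G+H=0$ implies $H=-G$, so your argument imports a standard but unstated fact of the theory. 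The direct two-sided application of Theorem~\ref{unsobordinatedorder} needs nothing beyond what the paper already displays, which is why it is the more economical derivation here; your conjugation argument has the mild advantage of showing how the Left and Right halves of the criterion are formally dual to each other.
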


\begin{corollary} \label{starplusstar}$*+*=\{*\,|\,*\}=0$.
\end{corollary}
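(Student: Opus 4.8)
The plan is to verify the two asserted equalities separately. First I would establish the identity of \emph{forms} $*+*=\{*\mid *\}$ by a direct appeal to the disjunctive sum rule: since $*=\{0\mid 0\}$, we have $*+*=\{0+*,\,*+0\mid 0+*,\,*+0\}=\{*\mid *\}$, using that $0$ is the identity for the sum of forms. This part is purely formal and requires nothing beyond the definition of $+$.

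The substance is the equivalence $\{*\mid *\}=0$ modulo $\mathcal{D}^-$, and here I would apply Corollary \ref{ettinger2} to $G=\{*\mid *\}$, checking its three conditions in turn. For the outcome condition I must show $o(*+*)=\mathcal{N}$: under misère convention the last player to move loses, so in $*$ the player to move is forced to $0$ and then loses, whence $*$ is a previous-player win; consequently, from $*+*$ the first player moves one component to $0$, leaving a single $*$ for the opponent, who now loses. Thus the first player wins whether Left or Right, giving $o(*+*)=\mathcal{N}$. For the remaining two conditions, note that the unique Right option of $G$ is $G^R=*$, whose unique Left option is $*^L=0$, so $G^{RL}=0$ and we need $0\succcurlyeq 0$; symmetrically the unique Left option is $G^L=*$ with $G^{LR}=0$, and we need $0\preccurlyeq 0$. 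Both hold by reflexivity of $\succcurlyeq$.

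Combining, Corollary \ref{ettinger2} yields $\{*\mid *\}=0$, and together with the form identity above this gives $*+*=\{*\mid *\}=0$.

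I do not anticipate a genuine obstacle: the argument is a direct instantiation of Corollary \ref{ettinger2} on the smallest nontrivial dicot. The only step demanding care is the outcome computation $o(*+*)=\mathcal{N}$, where one must respect the misère rule that the last player to move loses; a careless reading of the convention would interchange $\mathcal{N}$ and $\mathcal{P}$ and derail the verification. As a sanity check, one could reach the same conclusion from the McKay--Milley--Nowakowski criterion, since $*$ is self-conjugate and every subposition $G'$ of $*$ satisfies $G'-G'\in\mathcal{N}$, but the route through Corollary \ref{ettinger2} is the most direct.
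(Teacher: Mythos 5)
Your proof is correct and follows exactly the route the paper intends: the corollary is stated as an immediate consequence of Corollary \ref{ettinger2}, and you instantiate that corollary on $\{*\mid *\}$, verifying the outcome condition $o(*+*)=\mathcal{N}$ (with the mis\`ere convention handled correctly) and the two reversal conditions via $G^{RL}=G^{LR}=0$. The preliminary identification of the forms $*+*$ and $\{*\mid *\}$ via the disjunctive sum rule is also the natural (and needed) step, so there is nothing to add.
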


\noindent
Second, reductions and canonical forms:

\begin{theorem}[Domination]\label{thm:domination}
Let $G=\{\GL\mid \GR\}$ be a dicotic form. If  $A, B\in \GL$ and $A\pr B$ then $G = \{\GL\setminus\{A\}\mid \GR\}$.
\end{theorem}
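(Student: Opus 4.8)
The plan is to establish $G=G'$, where $G'=\{\GL\setminus\{A\}\mid\GR\}$, by proving the two inequalities $G\succcurlyeq G'$ and $G'\succcurlyeq G$ separately, in each case invoking the unsubordinated criterion of Theorem~\ref{unsobordinatedorder}. The whole argument rests on two structural facts. First, $G$ and $G'$ have \emph{identical} right options, so every condition of Theorem~\ref{unsobordinatedorder} concerning the Right side will be discharged by reflexivity. Second, the left option sets differ only in that $A\in\GL$ while $A\notin G'^{\mathcal{L}}=\GL\setminus\{A\}$, yet the dominating option $B$ survives in $G'^{\mathcal{L}}$, and the hypothesis $A\pr B$ reads $B\succcurlyeq A$. (Note that $G'$ remains a dicotic form, since $B\neq A$ keeps $G'^{\mathcal{L}}$ nonempty whenever $\GR\neq\emptyset$.) Everywhere a left option of $G$ must be ``covered'' by a left option of $G'$, the only problematic option is $A$ itself, and it is precisely $B\succcurlyeq A$ that covers it.

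For $G\succcurlyeq G'$ I would verify the three clauses as follows. The proviso $o(G)\geqslant o(G')$ follows directly: the right options coincide, so the outcome with Left moving second is identical in the two games, while for Left moving first every winning first move available in $G'$ is still available in $G$, whence $G$ is at least as good for Left. For the first Common Normal Part clause, each $G^R$ equals some $G'^R$, giving $G^R\succcurlyeq G'^R$ by reflexivity; for the second clause, each $G'^L$ is literally a left option of $G$, so taking $G^L=G'^L$ works.

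The reverse inequality $G'\succcurlyeq G$ is where the domination hypothesis does the real work. Again the Right-side clause, and the part of the proviso dealing with Left moving second, are immediate from the equality of right options. The two genuine points are: (i) the part of the proviso dealing with Left moving first, where a Left winning first move in $G$ to some $G^L\neq A$ persists in $G'$, while a winning move to $A$ can be replaced by a move to $B$, since $B\succcurlyeq A$ forces $o(B)\geqslant o(A)$ and hence $B$ is also a Left-winning first move; and (ii) the Common Normal Part clause ``for all $G^L$ there is $G'^L$ with $G'^L\succcurlyeq G^L$'': for $G^L\neq A$ take $G'^L=G^L$ by reflexivity, and for $G^L=A$ take $G'^L=B$, which satisfies $B\succcurlyeq A$ by hypothesis.

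The main obstacle is the proviso in the reverse direction, $o(G')\geqslant o(G)$: it is the only place where the abstract inequality $B\succcurlyeq A$ must be converted into a concrete outcome comparison, and under misère convention one must argue carefully that ``Left wins moving first'' in $G$ translates, via the outcome monotonicity $o(B)\geqslant o(A)$ extracted from $B\succcurlyeq A$ by taking $X=0$, into ``Left wins moving first'' in $G'$. Once this outcome bookkeeping is done, every remaining clause of Theorem~\ref{unsobordinatedorder} is either reflexivity or a verbatim appeal to $B\succcurlyeq A$, and combining $G\succcurlyeq G'$ with $G'\succcurlyeq G$ yields $G=G'$.
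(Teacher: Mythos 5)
Your proof is correct; note, though, that the paper itself states Domination (Theorem~\ref{thm:domination}) without proof---it is imported as background from \cite{Dorb2013,LarssonNS2016A}---so there is no in-paper argument to measure you against, and your proposal should be judged on its own. It holds up. Your symmetric double application of Theorem~\ref{unsobordinatedorder} is sound, and you correctly dispatched the genuinely delicate points: the implicit assumption $A\neq B$ (without which the statement would be vacuous or false, and which is needed so that $B$ survives into $G'^{\mathcal{L}}$ and $G'=\{\GL\setminus\{A\}\mid\GR\}$ remains dicotic); the extraction of $o(B)\geqslant o(A)$ by specializing $X=0\in\mathcal{D}^-$ in the definition of $\succcurlyeq_{\mathcal{D}^-}$; and the outcome bookkeeping in the proviso $o(G')\geqslant o(G)$, where one must use that $\mathscr{N}\,\|\,\mathscr{P}$, so the only outcomes $\geqslant\mathscr{P}$ are $\mathscr{P}$ and $\mathscr{L}$, whence a Left-winning first move to $A$ converts into a Left-winning first move to $B$; since $G'^{\mathcal{L}}\ni B$ is nonempty, the mis\`ere ``no move wins'' clause never interferes. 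Worth knowing for comparison: the route usually taken in the cited sources gets the easy direction $G\succcurlyeq G'$ in one line from the Hand-tying Principle (Theorem~\ref{lem:greediness}), since $G$ is exactly $G'$ with the extra Left option $A$ adjoined and $|G'^{\mathcal{L}}|\geqslant 1$ is witnessed by $B$; that spares the proviso check in that direction and isolates the content of domination entirely in $G'\succcurlyeq G$, where your clause-by-clause argument (reflexivity on the Right side, $B\succcurlyeq A$ covering $A$ on the Left side) matches the standard one. Your version trades that shortcut for uniformity, keeping both directions inside a single comparison framework, which is a perfectly reasonable exchange.
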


\begin{definition}
For a game $G$ in any universe $\U$, suppose there are followers $A\in \GL$ and
$B\in A^\mathcal R$ with $B\pr_\U G$.
 Then the Left option $A$ is \textit{reversible}, and sometimes, to be specific, $A$
 is said to be \textit{reversible  through} its right option $B$. In addition, $B$ is called a \textit{reversing} option for $A$ and,
 if $B^\mathcal{L}$ is non-empty then $B^\mathcal{L}$
  is a \textit{replacement set} for $A$. In this case,
 $A$ is said to be \textit{non-atomic-reversible}.
 If the reversing option is left-atomic, that is, if $B^\mathcal{L}=\emptyset$, then $A$ is
 said to be \textit{atomic-reversible}.
 \end{definition}

\begin{theorem}[Non-atomic reversibility]\label{thm:nonatomic}
Let $G$ be a dicotic form and suppose that $A$ is a left option of $G$ reversible through $B$.
If  $B^\mathcal{L}$ is non-empty, then\linebreak $G= \left\{(\GL\setminus\{A\})\cup B^\mathcal{L} \mid {\GR}\right\}$.
\end{theorem}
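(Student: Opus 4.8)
The plan is to set $G'=\{(\GL\setminus\{A\})\cup B^{\mathcal L}\mid\GR\}$ and to prove $G=G'$ by establishing the two inequalities $G\su G'$ and $G'\su G$ via Theorem~\ref{unsobordinatedorder}. First I would check that $G'$ is genuinely dicotic: since $A\in\GL$ and $G$ is dicotic we have $\GR\neq\emptyset$, and since $B^{\mathcal L}\neq\emptyset$ the new Left option set $(\GL\setminus\{A\})\cup B^{\mathcal L}$ is also nonempty, so both option sets of $G'$ are nonempty. This is exactly the point where the hypothesis $B^{\mathcal L}\neq\emptyset$ is used. Because $G$ and $G'$ share the identical Right option set $\GR$, the first Common Normal Part condition is automatic in both directions, matching each $G^R$ with itself.

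For the second Common Normal Part condition of $G\su G'$, a Left option of $G'$ either lies in $\GL\setminus\{A\}$, where it is matched with itself as a Left option of $G$, or it equals some $B^L\in B^{\mathcal L}$; in the latter case the requirement ``there is $G^L\su B^L$ or there is $B^{LR}$ with $G\su B^{LR}$'' is precisely condition (2) of the hypothesis $B\pr G$, which Theorem~\ref{unsobordinatedorder} supplies verbatim. For the second condition of $G'\su G$, a Left option of $G$ either lies in $\GL\setminus\{A\}$ (matched with itself in $G'$) or equals $A$; in this last case I take the Right option $B\in A^{\mathcal R}$, so it suffices to show $G'\su B$. I would record this auxiliary inequality separately: its first condition transfers from the first condition of $G\su B$ (the Right options of $G'$ and $G$, hence their Left options, coincide), while its second condition is immediate since every $B^L$ is itself a Left option of $G'$.

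It then remains to verify the provisos, namely the outcome equality $o(G)=o(G')$. The Right-moving-first outcomes coincide trivially, since the Right options are identical and each leads to a fixed game. For the Left-moving-first outcomes the options common to $G$ and $G'$ contribute identically, so the comparison reduces to $A$ versus $B^{\mathcal L}$. If Left moving first wins $G$ via $A$, then $o(A)\in\{\mathscr L,\mathscr P\}$, so Left, moving second in $A$, defeats every Right reply, in particular the move to $B$; hence Left moving first wins $B$, meaning some $B^L$ has $o(B^L)\in\{\mathscr L,\mathscr P\}$ and gives Left a winning first move in $G'$. Conversely, if some $B^L$ gives Left a winning first move in $G'$, then Left moving first wins $B$, so $o(B)\in\{\mathscr L,\mathscr N\}$; since $G\su B$ gives $o(G)\ge o(B)$, each possibility $o(B)=\mathscr L$ and $o(B)=\mathscr N$ forces $o(G)\in\{\mathscr L,\mathscr N\}$, so Left already wins $G$ moving first. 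Thus the Left-first outcomes agree and $o(G)=o(G')$.

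Combining the verified provisos with the Common Normal Part computations, Theorem~\ref{unsobordinatedorder} yields $G\su G'$ and $G'\su G$, hence $G=G'$. I expect the main obstacle to be the Left-first half of the outcome argument: it is the one place where the reversibility inequality $B\pr G$ must be woven together with the misère outcome order $\mathscr L>(\mathscr P\|\mathscr N)>\mathscr R$, and one must use carefully that $\mathscr P$ and $\mathscr N$ are incomparable, so that $o(G)\ge\mathscr N$ really does confine $o(G)$ to $\{\mathscr L,\mathscr N\}$. By contrast, the Common Normal Part bookkeeping is essentially mechanical once the auxiliary inequality $G'\su B$ is in hand.
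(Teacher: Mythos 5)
The paper itself contains no proof of this theorem: it is stated as background, with proofs deferred to the cited literature, so your argument has to be judged on its own terms. Your overall route is sound. Writing $G'=\{(\GL\setminus\{A\})\cup B^{\mathcal L}\mid\GR\}$ and attacking both inequalities with Theorem~\ref{unsobordinatedorder} works: the shared Right option set does make the first Common Normal Part condition trivial in both directions; the second condition of $G\su G'$ is indeed verbatim the second condition supplied by the hypothesis $B\pr G$; and your outcome analysis is correct, including the two delicate points (that $B^{\mathcal L}\neq\emptyset$ is what converts ``Left, moving first, wins $B$'' into ``some $B^L$ has $o(B^L)\in\{\mathscr L,\mathscr P\}$'' --- so, contrary to your remark, the hypothesis is used there as well, not only for dicoticity of $G'$ --- and that $o(G)\geqslant\mathscr N$ confines $o(G)$ to $\{\mathscr L,\mathscr N\}$ because $\mathscr P$ and $\mathscr N$ are incomparable).

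There is, however, one genuine omission. Your auxiliary claim $G'\su B$ is justified only by checking the two Common Normal Part conditions, but Theorem~\ref{unsobordinatedorder} is an equivalence between $\su_{\mathcal D^-}$ and the conjunction of the \emph{Proviso} and the Common Normal Part: without verifying $o(G')\geqslant o(B)$ you are not entitled to conclude $G'\su B$, and that inequality is exactly what carries the treatment of the option $A$ in the second condition of $G'\su G$. The hole is repairable with material you already have, but only after reordering the proof: first establish the outcome equality $o(G')=o(G)$ (your argument for it uses nothing beyond the hypothesis $G\su B$, so there is no circularity), then deduce the missing proviso from $o(G')=o(G)\geqslant o(B)$, where the last inequality is the proviso packaged inside the hypothesis $B\pr G$; only at that point may you invoke Theorem~\ref{unsobordinatedorder} to get $G'\su B$, and hence the Common Normal Part of $G'\su G$. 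As currently written --- Common Normal Part computations first, provisos last --- the justification of $G'\su B$ is incomplete at the moment it is used.
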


\begin{theorem} [Atomic reversibility] \label{thm:atomicd}
Let $G$ be a dicotic form suppose that $A\in\GL$ is reversible through $B=0$.
\begin{enumerate}
\item If, in $G$, there is a Left winning move $C\in G^{\cal L}\setminus\{A\}$, then
$G= \left\{ \GL\setminus \{A\}\mid {\GR}\right\}$;
\item If $A$ is the only winning Left move in $G$, then $G= \left\{ *,\GL\setminus\{A\}\mid {\GR}\right\}$.
\end{enumerate}
\end{theorem}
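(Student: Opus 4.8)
\noindent
The plan is to prove each of the two asserted identities by establishing the pair of inequalities $G\succcurlyeq G'$ and $G'\succcurlyeq G$ through the unsubordinated criterion of Theorem \ref{unsobordinatedorder}, where $G'$ denotes the reduced form claimed in the case at hand. The first thing I would do is unpack the hypothesis: since $A$ reverses through $B=0$, we have $0\in A^{\mathcal R}$ and $0\preccurlyeq G$, i.e.\ $G\succcurlyeq 0$. By Corollary \ref{ettinger} this last fact already supplies two usable pieces of data, namely $o(G)\geqslant\mathcal{N}$ and the statement that every $G^R$ admits some $G^{RL}\succcurlyeq 0$.

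\noindent
Next I would dispatch the routine matchings in Theorem \ref{unsobordinatedorder}. In both cases $G$ and $G'$ carry the same right option set $\GR$, so the right-option clause is met in either direction by pairing each right option with its identical copy; the same observation transports the property ``every right option has a left option $\succcurlyeq 0$'' from $G$ to $G'$. For the left-option clause, each left option common to $G$ and $G'$ is paired with itself, leaving exactly two nontrivial instances. When checking $G'\succcurlyeq G$, the deleted option $A$ is handled by the second (reversing) branch of the clause: since $0\in A^{\mathcal R}$, it suffices that $G'\succcurlyeq 0$. When checking $G\succcurlyeq G'$ in Case 2, the inserted option $*$ is handled the same way via $*^{\mathcal R}=\{0\}$, which reduces to the already known $G\succcurlyeq 0$. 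Hence both inequalities collapse to two tasks: the provisos $o(G)\geqslant o(G')$ and $o(G')\geqslant o(G)$, and the single inequality $G'\succcurlyeq 0$.

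\noindent
The substance of the proof is therefore the outcome identity $o(G)=o(G')$, which I would obtain by comparing $G$ and $G'$ on the pair (Left moving first, Right moving first). Because the right options coincide, the Right-first verdicts agree at once. For the Left-first verdict, $G\succcurlyeq 0$ forces $o(G)\geqslant\mathcal{N}$, so Left wins moving first in $G$, and I must show the same for $G'$. In Case 1 this is immediate, since the winning left move $C\neq A$ survives in $\GL\setminus\{A\}$. In Case 2, where $A$ is the unique winning left move, merely erasing $A$ would rob Left of any winning first move (and would even break dicoticness when $\GL=\{A\}$); this is exactly why $*$ is adjoined, for $o(*)=\mathscr{P}$ makes moving to $*$ a Left win and restores the Left-first verdict. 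With $o(G')=o(G)$ in hand the provisos hold, and $G'\succcurlyeq 0$ follows from Corollary \ref{ettinger} using $o(G')\geqslant\mathcal{N}$ together with the inherited reversing options, closing both inequalities.

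\noindent
I expect the main obstacle to lie not in the order-theoretic bookkeeping but in the clean justification, in Case 2, that $*$ is the correct and minimal object to insert: it must simultaneously supply Left a winning opening to match $o(G)$, keep every subposition dicotic even when $A$ was the only left option, and present a unique right option $0$ dominated by $G$ so that the reversing branch still closes---all without perturbing the Right-first outcome. Pinning down that $*$ fills all of these roles at once is the delicate point; everything else is the mechanical matching described above.
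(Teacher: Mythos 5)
The paper never proves this theorem: it is quoted as background machinery for canonical forms, with the proofs deferred to \cite{Dorb2013}, so there is no internal argument to measure yours against. Judged on its own merits, your proof is correct within the paper's framework. The reduction is exactly right: writing $G'$ for the claimed reduced form, $G$ and $G'$ share the right-option set $\GR$, so reflexivity of $\succcurlyeq$ discharges the right-option clause of Theorem \ref{unsobordinatedorder} in both directions as well as all common left options; the deleted option $A$ is absorbed through its reversing option $0\in A^{\mathcal{R}}$ once $G'\succcurlyeq 0$ is known, and in Case 2 the adjoined $*$ is absorbed symmetrically through $*^{\mathcal{R}}=\{0\}$ and the hypothesis $G\succcurlyeq 0$. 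The outcome analysis also holds up: identical right-option sets give identical Right-first verdicts, $G\succcurlyeq 0$ gives Left a winning first move in $G$ (Corollary \ref{ettinger}), and $C$ (Case 1) or $*$ (Case 2) gives Left one in $G'$, while $C$, respectively $*$, also keeps $G'$ dicotic; finally $G'\succcurlyeq 0$ follows from Corollary \ref{ettinger} because $o(G')\geqslant\mathcal{N}$ and the witnesses $G^{RL}\succcurlyeq 0$ are inherited verbatim. The ``delicate point'' you flag at the end --- that $*$ simultaneously restores the Left-first verdict, preserves dicoticity, and reverses out through $0$ --- is in fact already fully covered by your own matching, so no gap remains there. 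It is worth noting what your route buys: the original proofs of atomic reversibility in the literature predate the unsubordinated comparison theorem and argue directly about outcomes of $G+X$ versus $G'+X$ over all dicotic contexts $X$; your derivation is instead purely order-theoretic, resting on Theorem \ref{unsobordinatedorder} and Corollary \ref{ettinger}, which this paper supplies as standing background, and is accordingly shorter and entirely mechanical once the two nontrivial obligations ($o(G)=o(G')$ and $G'\succcurlyeq 0$) are isolated.
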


\begin{theorem} [Substitution Theorem] \label{thm:substituted}
If $G=\{A\mid C\}$ where $A$ and $C$ are atomic-reversible options
 then $G=0$.
\end{theorem}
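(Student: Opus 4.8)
The plan is to derive the two inequalities $G\succcurlyeq 0$ and $G\preccurlyeq 0$ from the two atomic-reversibility hypotheses and then conclude $G=0$ by antisymmetry of $\succcurlyeq$ (equivalently, by the definition of $=$ as $o(G+X)=o(0+X)$ for all $X\in\mathcal{D}^-$). The engine of the whole argument is that, because $G=\{A\mid C\}$ is a dicotic form, an atomic reversing option cannot merely be empty on one side: the dicotic property collapses it all the way to the empty game $0$.

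First I would treat the Left option. Since $A$ is atomic-reversible, the definition of reversibility provides a right option $B\in A^{\mathcal R}$ with $B^{\mathcal L}=\emptyset$ and $B\preccurlyeq G$. As $B$ is a follower of the dicotic form $G$, it is itself dicotic, so $B^{\mathcal L}=\emptyset$ forces $B^{\mathcal R}=\emptyset$; hence $B$ is the empty game, $B=0$. The stored comparison $B\preccurlyeq G$ then reads $0\preccurlyeq G$, i.e.\ $G\succcurlyeq 0$. Next I would treat the Right option by the conjugate reading of the same definition: $C$ atomic-reversible yields a left option $D\in C^{\mathcal L}$ with $D^{\mathcal R}=\emptyset$ and the flipped comparison $D\succcurlyeq G$; dicoticness again gives $D=0$, so $0\succcurlyeq G$, i.e.\ $G\preccurlyeq 0$. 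Having $G\succcurlyeq 0$ and $G\preccurlyeq 0$ simultaneously, antisymmetry delivers $G=0$.

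I do not expect a serious obstacle; the two places that need care are (i) invoking the dicotic hypothesis to pin each atomic reversing option to the literal empty game $0$ rather than to some one-sidedly-empty position, and (ii) transcribing the reversibility definition for a Right option so that the comparison correctly flips to $D\succcurlyeq G$. As a cross-check one could instead verify the three clauses of Corollary~\ref{ettinger2}: the two option-clauses are immediate, since the unique $G^R=C$ has the left option $D=0$ with $0\succcurlyeq 0$ and the unique $G^L=A$ has the right option $B=0$ with $0\preccurlyeq 0$; only $o(G)=\mathcal{N}$ would remain, and that is itself most cleanly obtained by squeezing $o(G)$ between $o(0)$ from above and below via the same two inequalities --- so the direct antisymmetry route is the more economical one.
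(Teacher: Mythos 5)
Your proof is correct. Note, though, that the paper never proves Theorem~\ref{thm:substituted}: it is stated as imported background (part of the canonical-form machinery cited from \cite{Dorb2013}), so there is no in-paper argument to compare against, and your proposal stands or falls on its own --- and it stands. The step carrying all the weight, pinning each atomic reversing option to the literal empty game, is exactly right: a reversing option $B$ with $B^{\mathcal{L}}=\emptyset$ is a follower of the dicotic form $G$, hence itself dicotic, hence also $B^{\mathcal{R}}=\emptyset$, so $B=\{\,|\,\}=0$; the paper implicitly endorses this reading in Theorem~\ref{thm:atomicd}, which speaks of an option ``reversible through $B=0$''. The definition of reversibility then hands you $0\preccurlyeq G$ (from $A$) and, in the conjugate reading, $G\preccurlyeq 0$ (from $C$); since the outcome order is a partial order, antisymmetry gives $o(G+X)=o(X)$ for every dicot $X$, i.e.\ $G=0$. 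This route is more economical than the cross-check you sketch via Corollary~\ref{ettinger2}, whose outcome clause $o(G)=\mathcal{N}$ is in any case obtained most cleanly by specializing your two inequalities at $X=0$. One point to make explicit in a final write-up: the statement of the theorem does not repeat the word ``dicotic'', so you should record the standing hypothesis that $G=\{A\mid C\}$ is a dicotic form (as in Theorems~\ref{thm:domination}--\ref{thm:atomicd}); without it the collapse to $B=0$ fails, and with it the whole argument.
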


\noindent
A form  $G$ is said to be in \textit{canonical form} if none of the previous theorems can be applied to
$G$ or followers to obtain an equivalent game in mis\`ere dicotic universe with different sets of options. In \cite{Dorb2013}, we can find a proof for unicity and simplicity of mis\`ere dicotic canonical forms.\\

\noindent
Third, the conjugate property:

\begin{theorem}\label{Conjugate} For all dicotic forms $G,H$, if $G+H=0$ then $H=-G$.\\
\end{theorem}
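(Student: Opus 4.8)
Because $\mathcal{D}^-$ is not a group, I cannot cancel $G$ and read off $H=-G$ from $G+H=0$; the equality must be forced structurally. The plan is to establish $H\succcurlyeq -G$ and $H\preccurlyeq -G$ and then invoke antisymmetry of $\succcurlyeq$. I would first record the standard facts about conjugation that make a symmetry reduction possible: $K\mapsto -K$ is an involution that preserves $=$, distributes over disjunctive sum ($-(A+B)=(-A)+(-B)$ as games), fixes $0$, and satisfies $o(-K)=\overline{o(K)}$, where $\overline{\cdot}$ swaps $\mathcal{L}\leftrightarrow\mathcal{R}$ and fixes $\mathcal{N},\mathcal{P}$; in particular conjugation reverses the partial order $\succcurlyeq$. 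Consequently it suffices to prove the single implication ``$G+H=0 \Rightarrow H\succcurlyeq -G$'': applying this to the pair $(-G,-H)$, which again satisfies $(-G)+(-H)=-(G+H)=0$, gives $-H\succcurlyeq G$, hence $H\preccurlyeq -G$, and the two inequalities yield $H=-G$.

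Next I would prove ``$G+H=0\Rightarrow H\succcurlyeq -G$'' by induction on the total number of positions of $G$ and $H$, verifying the unsubordinated criterion of Theorem \ref{unsobordinatedorder} with the roles $H$ in the upper slot and $-G$ in the lower. Concretely I must check the proviso $o(H)\ge o(-G)$; that for every $H^R$ either some $(-G)^R=-G^L$ has $H^R\succcurlyeq -G^L$ or some $H^{RL}\succcurlyeq -G$; and that for every $G^R$ either some $H^L\succcurlyeq -G^R$ or some $-G^{RL}$ has $H\succcurlyeq -G^{RL}$. The structural engine feeding these checks is Corollary \ref{ettinger2} applied to the hypothesis $G+H=0$: it supplies $o(G+H)=\mathcal{N}$ together with, for each right option $G^R+H$ and $G+H^R$, a Left option among $\{G^{RL}+H,\ G^R+H^L,\ G^L+H^R,\ G+H^{RL}\}$ that is $\succcurlyeq 0$, and the conjugate statements for the left options. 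The intended mechanism is that, for a prescribed $H^R$, the Left response guaranteed for the right option $G+H^R$ isolates a partner $G^L$ (or a deeper follower) whose combination with $H^R$ is controlled, and the induction hypothesis then upgrades this control to the comparison $H^R\succcurlyeq -G^L$ demanded by the criterion.

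The hard part is precisely the mismatch between what Corollary \ref{ettinger2} produces and what the induction consumes: the characterization of $G+H=0$ yields only assertions of the shape ``$G^L+H^R\succcurlyeq 0$'' about option-sums, whereas invoking the induction hypothesis to conclude $H^R=-G^L$ requires the \emph{exact} equality $G^L+H^R=0$. Closing this gap is the crux of the argument. I would address it by a strengthened, simultaneous induction that carries both directions of comparison at the option level, and by exploiting the disjunctions built into Theorem \ref{unsobordinatedorder}: when an option fails to be exactly invertible, the reversibility escape clauses (``or there is $G^{RL}\succcurlyeq\cdots$'', ``or there is $H^{LR}\preccurlyeq\cdots$'') are used in its place, which is exactly why those clauses are present in the criterion. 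A secondary step, needed before the main induction can even start, is the proviso $o(H)=o(-G)$; I would extract it from $o(G+H)=\mathcal{N}$ by a short outcome argument that rules out, via suitable test positions $X\in\mathcal{D}^-$, any discrepancy between $o(H)$ and $\overline{o(G)}$. With the proviso and the option-level comparisons secured, Theorem \ref{unsobordinatedorder} completes the induction, and the symmetry reduction of the first paragraph finishes the proof.
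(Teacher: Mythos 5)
The paper never actually proves Theorem \ref{Conjugate}: it is quoted as background (the ``conjugate property'' imported from the cited literature \cite{Dorb2013,LarssonNS2016A}), so your proposal has to stand on its own. Its outer skeleton is right --- the symmetry reduction via conjugation is sound, and verifying the criterion of Theorem \ref{unsobordinatedorder} by induction is the correct engine --- but the proposal stalls exactly at the point you yourself call ``the crux,'' and the repair you sketch (``a strengthened, simultaneous induction that carries both directions of comparison,'' plus ``reversibility escape clauses'') is never instantiated and is not the repair that works. The missing idea is concrete: the statement to prove by induction is not the equality statement but the one-sided inequality statement: for all dicots $A,B$, if $A+B\succcurlyeq 0$ then $B\succcurlyeq -A$. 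With this as induction hypothesis, the mismatch you describe evaporates. Apply Corollary \ref{ettinger} (the $\succcurlyeq 0$ criterion, not Corollary \ref{ettinger2}) to $A+B\succcurlyeq 0$: for the right option $A+B^R$ it provides some $A^L+B^R\succcurlyeq 0$ or some $A+B^{RL}\succcurlyeq 0$, and the induction hypothesis turns these into $B^R\succcurlyeq -A^L$ or $B^{RL}\succcurlyeq -A$, which are precisely the two disjuncts of the first clause of Theorem \ref{unsobordinatedorder} for the comparison $B\succcurlyeq -A$; the right options $A^R+B$ likewise yield the second clause. The criterion never asks you to conclude $B^R=-A^L$; the ``exact equality $G^L+H^R=0$'' you thought you needed is an artifact of inducting on too strong a statement, and with the equality statement the induction genuinely does not close. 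The theorem itself then follows by applying the inequality statement to $G+H\succcurlyeq 0$ and to $(-G)+(-H)\succcurlyeq 0$, i.e.\ by your own symmetry argument.

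The proviso is the second genuine gap, and your proposed fix cannot work as stated: $o(H)\ge o(-G)$ is not extractable from $o(G+H)=\mathcal{N}$ by any argument, because that single outcome does not constrain the pair $(o(G),o(H))$; for instance, with $G=*2$ and $H=*$ we have $o(G+H)=\mathcal{N}$ while $o(H)=\mathcal{P}\not\ge\mathcal{N}=o(-G)$ (of course $*2+*\neq 0$, but your sentence invokes only the outcome of the sum). In the corrected induction the proviso $o(B)\ge o(-A)$ falls out of the same recursion rather than from test positions: if Left wins $-A$ playing second, then for any Right move $B^R$, Corollary \ref{ettinger} plus the induction hypothesis give either $B^R\succcurlyeq -A^L$ (and Left wins $-A^L$ playing first, because she wins $-A$ playing second) or $B^{RL}\succcurlyeq -A$; since order comparison implies outcome comparison (test with $X=0$), Left has a winning continuation in both cases, so she wins $B$ playing second; the moving-first case is analogous. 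In short, both of your ``I would\ldots'' steps can be completed, but only after the equality induction is replaced by the inequality induction; as written, the proposal names the crux and does not cross it.
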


\noindent
We recall also the concept of $G^\circ$, \emph{adjoint} of $G$. It is known that $G+G^\circ\in\P$ \cite{Siegel2013}.

\begin{definition}
$G^\circ=\left\{\begin{array}{ll}
                  * & \text{if }\GL=\emptyset \text{ and }\GR=\emptyset; \\
                  \{(\GR)^\circ\,|\,0\} & \text{if }\GL=\emptyset \text{ and }\GR\neq\emptyset;\\
                  \{0\,|\,(\GL)^\circ\} & \text{if }\GL\neq\emptyset \text{ and }\GR=\emptyset; \\
                  \{(\GR)^\circ\,|\,(\GL)^\circ\} & \text{if }\GL\neq\emptyset \text{ and }\GR\neq\emptyset.
                \end{array}
\right.$
\end{definition}

\subsubsection{Absolute facts}

\noindent
The following results hold both in mis\`ere and normal universes. The proofs are not specially difficult and can be found, for example, in \cite{LarssonNS2016A}.

\begin{theorem}\label{lem:oneadded}
For any universe $\U$ and any games $G, H, J \in \U$, if $G\succcurlyeq_\mathcal{U} H$ then
$G+J\succcurlyeq_\mathcal{U} H+J$.
\end{theorem}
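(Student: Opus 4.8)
The plan is to unwind the definition of $\succcurlyeq_\U$ and then exploit the disjunctive-sum closure that is built into the notion of a universe. By Definition \ref{def:eq}, establishing $G+J \succcurlyeq_\U H+J$ is the same as showing $o\big((G+J)+X\big) \geqslant o\big((H+J)+X\big)$ for every game $X \in \U$. So I would begin by fixing an arbitrary test game $X \in \U$ and aim to reduce the desired inequality to a single instance of the hypothesis $G \succcurlyeq_\U H$.

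The crucial observation is that $J \in \U$ and $X \in \U$ together force $J + X \in \U$, precisely by the disjunctive-sum closure of Definition \ref{def:universe}. Setting $Y = J + X$, the game $Y$ is therefore a legitimate test game for the relation $G \succcurlyeq_\U H$. Applying the hypothesis to $Y$ yields $o(G + Y) \geqslant o(H + Y)$, that is, $o\big(G + (J+X)\big) \geqslant o\big(H + (J+X)\big)$.

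To finish, I would invoke the associativity and commutativity of disjunctive sum, which hold at the level of game values for short games, to identify $G + (J+X)$ with $(G+J)+X$ and $H + (J+X)$ with $(H+J)+X$. This converts the inequality just obtained into $o\big((G+J)+X\big) \geqslant o\big((H+J)+X\big)$. Since $X \in \U$ was arbitrary, this is exactly the statement $G+J \succcurlyeq_\U H+J$.

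I do not expect any genuine obstacle here: the entire content of the lemma is that every test game for the conclusion can be repackaged as a test game for the hypothesis by absorbing the fixed summand $J$. The only points that require care are formal ones, namely confirming that $J+X$ really does lie in $\U$ (which is the whole reason universes are required to be closed under disjunctive sum) and checking that the associativity/commutativity rewriting is being used at the level of game values rather than merely of forms.
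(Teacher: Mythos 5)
Your proof is correct, and it is essentially the canonical argument: the paper itself offers no proof of this theorem, deferring to \cite{LarssonNS2016A}, and the proof found there is precisely your repackaging of the test game, using disjunctive-sum closure of $\U$ to treat $J+X$ as a single test position. One small simplification to your last paragraph: associativity and commutativity of disjunctive sum hold already at the level of forms --- $(G+J)+X$ and $G+(J+X)$ are the same game tree --- so the outcome function applies directly and no value-level identification (which would itself presuppose a comparison modulo $\U$) is needed.
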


\begin{theorem}\label{lem:dis}
Let $G,H\in\mathcal{U}$ and let $J\in \mathcal{U}$ be invertible. Then
$G+J\su_\mathcal{U} H+J$ if and only if
 $G\su_\mathcal{U} H$.
\end{theorem}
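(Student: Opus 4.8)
The plan is to prove the two implications separately, leaning entirely on the monotonicity statement of Theorem~\ref{lem:oneadded} together with the mere existence of an inverse for $J$; this is the classical ``cancellation by an invertible element'' argument adapted to the restricted order $\su_\mathcal{U}$.

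The backward implication is immediate. Assuming $G \su_\mathcal{U} H$, I would apply Theorem~\ref{lem:oneadded} with the game $J$ to obtain $G + J \su_\mathcal{U} H + J$ at once, with no further work required.

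For the forward implication, suppose $G + J \su_\mathcal{U} H + J$. Because $J$ is invertible in $\U$, there is a game $J' \in \U$ with $J + J' =_\mathcal{U} 0$ (and, if one wishes to be explicit, the conjugate property of Theorem~\ref{Conjugate} forces $J' = -J$, which lies in $\U$ by conjugate closure in Definition~\ref{def:universe}). I would then invoke Theorem~\ref{lem:oneadded} a second time, now adding $J'$ to both sides of the hypothesis, which yields $(G + J) + J' \su_\mathcal{U} (H + J) + J'$. Reassociating and commuting the disjunctive sum, and then substituting $J + J' =_\mathcal{U} 0$, collapses the left-hand side to $G + 0$ and the right-hand side to $H + 0$; since $0$ is an identity for disjunctive sum modulo any universe, this is exactly $G \su_\mathcal{U} H$, as desired.

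I do not expect a substantive obstacle here: everything rests on the monotonicity of addition, on associativity and commutativity of the disjunctive sum, and on $G + 0 =_\mathcal{U} G$, all of which hold in every universe. The only point demanding any care is the passage from the abstract hypothesis ``$J$ is invertible'' to a concrete inverse $J' \in \U$ usable in the cancellation step, for which one must record that the inverse is itself a member of the universe; this is guaranteed either directly by the definition of invertibility in $\U$ or, equivalently, by combining Theorem~\ref{Conjugate} with the conjugate-closure axiom.
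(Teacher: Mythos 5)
Your argument is correct, and it is the standard ``cancellation by an invertible element'' proof. Note that the paper itself does not prove Theorem~\ref{lem:dis}: it lists it among the ``absolute facts'' whose proofs are deferred to the cited preprint \cite{LarssonNS2016A}, so there is no in-paper proof to compare against; your proof is essentially the one found in that literature. The backward direction is indeed just Theorem~\ref{lem:oneadded}, and the forward direction works exactly as you describe, provided you make explicit that $=_\mathcal{U}$ is a congruence for disjunctive sum (i.e.\ $A =_\mathcal{U} B$ implies $A + C =_\mathcal{U} B + C$), which follows from the sum-closure axiom of Definition~\ref{def:universe}: for any $X \in \U$, $C + X \in \U$, so $o(A+C+X) = o(B+C+X)$. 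That is the step silently used when you ``substitute $J + J' =_\mathcal{U} 0$'' inside the inequality.

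One genuine caution: your parenthetical claim that Theorem~\ref{Conjugate} ``forces $J' = -J$'' oversteps. Theorem~\ref{Conjugate} is stated only for dicotic forms (it is one of the special properties of the mis\`ere dicotic universe), whereas Theorem~\ref{lem:dis} is asserted for an \emph{arbitrary} universe $\U$; in a general universe the inverse of $J$, when it exists, need not be the conjugate $-J$, and the ``equivalently'' in your final sentence is false at that level of generality. Fortunately this remark is inessential: your cancellation step only needs \emph{some} $J' \in \U$ with $J + J' =_\mathcal{U} 0$, which is exactly what the definition of invertibility in $\U$ supplies, so the proof stands once that parenthetical is dropped.
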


\begin{theorem}\label{lem:oneadded}
For any universe $\U$ and any games $G, H, J \in \U$, if $G\succ_\mathcal{U} 0$ and $H\succcurlyeq_\mathcal{U} 0$ then
$G+H\succ_\mathcal{U} 0$.
\end{theorem}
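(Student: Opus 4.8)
The plan is to establish the two ingredients that together constitute $G+H\succ_\mathcal{U}0$: the weak inequality $G+H\succcurlyeq_\mathcal{U}0$, and the separation $G+H\neq_\mathcal{U}0$. Recall that, since $\succcurlyeq_\mathcal{U}$ is antisymmetric modulo $=_\mathcal{U}$, writing $G\succ_\mathcal{U}0$ is the same as asserting $G\succcurlyeq_\mathcal{U}0$ together with $G\neq_\mathcal{U}0$; so it suffices to produce these two facts for $G+H$.

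For the weak inequality, I would first note that $G\succ_\mathcal{U}0$ gives $G\succcurlyeq_\mathcal{U}0$. Applying the monotonicity result stated above (if $G\succcurlyeq_\mathcal{U}H$ then $G+J\succcurlyeq_\mathcal{U}H+J$) with $J=H$ yields $G+H\succcurlyeq_\mathcal{U}0+H=_\mathcal{U}H$. Since $H\succcurlyeq_\mathcal{U}0$ by hypothesis, and since $\succcurlyeq_\mathcal{U}$ is transitive (immediate from its definition, as $\geqslant$ on outcomes is a transitive relation), I conclude $G+H\succcurlyeq_\mathcal{U}0$.

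For the separation, I would unfold $G\neq_\mathcal{U}0$ to produce a witness $X\in\mathcal{U}$ with $o(G+X)\neq o(X)$; combined with $o(G+X)\geqslant o(X)$ (which holds for every test game because $G\succcurlyeq_\mathcal{U}0$), this forces the strict inequality $o(G+X)>o(X)$. The key move is that $G+X$ again belongs to $\mathcal{U}$, by the disjunctive-sum closure axiom, so the hypothesis $H\succcurlyeq_\mathcal{U}0$ may legitimately be instantiated at the test game $G+X$, giving $o\bigl(H+(G+X)\bigr)\geqslant o(G+X)$. Using commutativity and associativity of the disjunctive sum to rewrite $H+(G+X)$ as $(G+H)+X$, the chain $o\bigl((G+H)+X\bigr)\geqslant o(G+X)>o(X)=o(0+X)$ shows that this same $X$ separates $G+H$ from $0$, i.e.\ $G+H\neq_\mathcal{U}0$.

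The two parts combine to give $G+H\succ_\mathcal{U}0$. I do not anticipate a genuine obstacle; the argument is essentially a bookkeeping exercise in the definitions. The one point that must not be overlooked is the use of disjunctive-sum closure to guarantee that the separating game $G+X$ is itself a legal test object in $\mathcal{U}$ --- this is precisely what lets the weak hypothesis on $H$ do its work, and it is the feature that makes the proof go through despite the absence of a group structure. The symmetric statement, with the roles of the strict and weak hypotheses exchanged, follows at once by commutativity of the sum.
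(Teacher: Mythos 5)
Your proof is correct. The paper itself offers no proof of this statement --- it is listed among the ``absolute facts'' whose proofs are deferred to \cite{LarssonNS2016A} --- so there is nothing in-paper to compare against; your argument (weak inequality via the monotonicity theorem and transitivity, then non-equivalence by taking a witness $X$ for $G\neq_\mathcal{U}0$ and instantiating $H\succcurlyeq_\mathcal{U}0$ at the test position $G+X$, legitimate because $\mathcal{U}$ is closed under disjunctive sum) is the standard one and is complete.
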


\begin{theorem} \em{(Hand-tying Principle).}\label{lem:greediness}
Let $G\in\mathcal{U}$. If  $|\GL|\ge 1$  then for any $A\in\mathcal{U}$,
$ \{ \GL\cup \{A\}\mid {\GR}\}\su_\mathcal{U} G$.\\
\end{theorem}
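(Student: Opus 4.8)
The plan is to unwind Definition~\ref{def:eq} into two strategic claims and prove them together by induction on the test game. Write $H=\{\GL\cup\{A\}\mid\GR\}$ for the augmented form, and note that $H^\L=\GL\cup\{A\}$ while $H^\R=\GR$, so Right's options are literally unchanged. Since the outcome order $\mathscr{L}>(\mathscr{P}\|\mathscr{N})>\mathscr{R}$ is exactly the product order on the two Booleans ``Left wins moving first'' and ``Left wins moving second'', the inequality $o(H+X)\ge o(G+X)$ for a fixed $X\in\U$ is equivalent to the conjunction of: (a) if Left wins $G+X$ moving second then Left wins $H+X$ moving second; and (b) if Left wins $G+X$ moving first then Left wins $H+X$ moving first. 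First I would record this reformulation, reducing everything to establishing (a) and (b) for every $X\in\U$.

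The core idea is that Left ignores the extra option $A$ and copies, inside $H$, whatever winning line it had in $G$. For (b) I would take a Left first move in $G+X$ that wins: if it is to some $G^L+X$ with $G^L\in\GL$, the same move is available in $H$ (as $\GL\subseteq H^\L$) and reaches the identical position; if it is to $G+X^L$, then Left has just become the second player in $G+X^L$, and I would invoke (a) for the strictly smaller test game $X^L$ (which lies in $\U$ by options closure). For (a) I would examine each Right first move from $H+X$: a move to $G^R+X$ with $G^R\in\GR$ reaches precisely a position Right could already reach from $G+X$, where Left wins by hypothesis; a move to $H+X^R$ is settled by invoking (b) for the smaller $X^R$, since Right's move to $G+X^R$ in $G+X$ leaves Left a winning first move. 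Hence (a) for $X$ calls (b) for $X^R$ and (b) for $X$ calls (a) for $X^L$, so a mutual induction on the rank of $X$ closes the argument; the base case $X=0$ is the same two copying steps with no test-component moves to consider.

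The single delicate point --- and the only place where $|\GL|\ge 1$ enters --- is the mis\`ere convention that a player unable to move wins. If $\GL$ were empty, Left could ``win $G+X$ moving first'' vacuously by having no move at all, whereas in $H+X$ Left would be forced to play the new option $A$ and might thereby lose; already $G=0$, $A=0$ gives $o(\{0\mid\})=\mathscr{R}<\mathscr{N}=o(0)$, so the statement genuinely fails without the hypothesis. When $|\GL|\ge 1$, however, Left has a move in every $G+X$ (and hence in every $H+X$) that arises, so ``Left wins moving first'' always stems from a genuine move that can be transcribed into $H$, and this degenerate situation never occurs. I therefore expect the honest bookkeeping of this terminal convention to be the only obstacle; once it is isolated as above, the verifications in (a) and (b) are routine strategy-copying.
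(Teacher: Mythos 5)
Your proof is correct. The paper itself contains no proof of this statement --- it is listed among the ``absolute facts'' whose proofs are deferred to \cite{LarssonNS2016A} --- but the argument given there is essentially the one you give: Left ignores the added option $A$ and copies her winning strategy from $G+X$ inside $\{\GL\cup\{A\}\mid\GR\}+X$, with $|\GL|\ge 1$ used exactly where you use it, namely to rule out the mis\`ere degeneracy in which Left ``wins'' $G+X$ by being unable to move while being forced to move in the augmented game (your counterexample $\{0\mid\,\}$ versus $0$ pinpoints this correctly). Your decomposition of the outcome inequality into the two first-player/second-player implications, together with the mutual induction on the test position $X$ (noting that after any move in the $G$-component the two sums become literally identical, so recursion is needed only on $X$), is the standard way to make the strategy-copying rigorous.
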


\vspace{-0.4cm}
\section{Invertible elements in mis\`ere dicotic universe}\label{sec2}

\noindent
It is easy to check that, regarding mis\`ere dicotic universe, $*2$ is not invertible. By conjugate property, if it was invertible, we would have $*2+*2=0$.\linebreak However, $*2+*2\in\P$ and $0\in\N$. So, a natural question arises: ``Is it true that a\linebreak non-invertible element $G$ always satisfies the property $G-G\in\P$?''. The answer is no; for example, $G=\{0\,|\,*2\}$ is in canonical form, $G-G\in\N$, and $G-G\neq 0$.\linebreak Another question is the following: ``Is it true that a non-invertible element $G$ always has $*2$ as a follower?''. The answer is no. Consider $H=\{0,*\,|\,\{*\,|\,0,*\},\{0\,|\,0,*\}\}$ in canonical form. Then, the game $G=\{0\,|H\}$ has not $*2$ as a follower, $G-G\in\N$, and $G-G\neq0$. These questions touch the essence of the problem, but the \linebreak adequate characterization of the invertible elements of mis\`ere dicotic universe is more sophisticated and it is presented in Theorem \ref{invertibles}.

\subsection{Structure of the proof}\label{structure}

Consider $G$ in canonical form. By Corollary \ref{ettinger2}, $G-G=0$ iff $o(G-G)=\mathcal{N}$, for all $(G-G)^R$ there is $(G-G)^{RL}\succcurlyeq 0$, and for all $(G-G)^L$ there is $(G-G)^{LR}\preccurlyeq 0$. The difficult part is to prove that if $G$ is invertible, then there is no $G'$, subposition of $G$, such that $G'-G'\in\mathcal{P}$.\\

\noindent
The proof is an \emph{argumentum ad absurdum}. If we had a simplest invertible\linebreak canonical form $G$ with an option $G^{L_1}$ with a follower $G^{L_1'}-G^{L_1'}\in\P$, it would be\linebreak mandatory to have some $(G^{L_1}-G)^R\preccurlyeq 0$. Being possible to argue that it should be some $G^{L_1}-G^{L_2}\prec 0$, considering $G^{L_2}$, again by Corollary \ref{ettinger2}, it would be\linebreak mandatory to have $(G-G^{L_2})^L\succcurlyeq 0$. Repeating the process, the \emph{argumentum ad absurdum} is based on the existence of an infinite \emph{carrousel}, meaningless in the context of short games:\\

\noindent
$G^{L_1}-G^{L_2}\prec 0$\\
\noindent
$G^{L_3}-G^{L_2}\succ 0$\\
\noindent
$G^{L_3}-G^{L_4}\prec 0$\\
\noindent
$G^{L_5}-G^{L_4}\succ  0$\\
\noindent
(\ldots)\\

\noindent
A crucial detail is related to the following question: regarding $G^{L_3}-G$, why should be mandatory to have $G^{L_3}-G^{L_4}\prec 0$, and not, say, $G^{L_3}-G^{L_1}\prec 0$? The first idea is the following: due to $G^{L_3}-G^{L_2}\succ 0$, and $G^{L_1}-G^{L_2}\prec 0$, we would have $G^{L_3}-G^{L_2}\succ 0$, and $G^{L_2}-G^{L_1}\succ 0$, and, so, $G^{L_3}-G^{L_2}+G^{L_2}-G^{L_1}\succ 0$. Because of that, $G^{L_3}-G^{L_2}+G^{L_2}-G^{L_1}\succ 0 \Leftrightarrow G^{L_3}-G^{L_1}\succ 0$, and $G^{L_3}-G^{L_1}\prec 0$ would be an impossibility. However, there is no group structure, and we cannot argue like that. If $H$ is not invertible, $G+H-H+W\succ 0$ is not equivalent to $G+W\succ 0$! Fortunately, it is possible to prove a weaker result (Lemma \ref{invertiblelemma}) that makes the proof work for the monoid structure: If $G\succ 0$, then $G+H-H\not\prec 0$.

\subsection{Characterization of the invertible elements of $\mathcal{D}^-$}\label{proof}

\begin{lemma}\label{invertiblelemma}
Let $G$ and $H$ be two dicots. If $G\succ 0$, then $G+H-H\not\prec 0$.
\end{lemma}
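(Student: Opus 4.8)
The plan is to argue by contradiction, using only monotonicity of the disjunctive sum together with the conjugate symmetry of the order. Both of these are available in any universe (no cancellation is ever invoked), so the absence of a group structure causes no trouble. So, suppose toward a contradiction that $G\succ 0$ yet $G+H-H\prec 0$; in particular $G+H-H\preccurlyeq 0$.

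First I would trap $G+H-H$ between $H-H$ and $0$. Since $G\succcurlyeq 0$, monotonicity of disjunctive sum (Theorem \ref{lem:oneadded}), applied with the added game $J=H+(-H)$, gives $G+(H-H)\succcurlyeq 0+(H-H)=H-H$. Combining this lower bound with the assumption $G+H-H\preccurlyeq 0$ and transitivity yields $H-H\preccurlyeq 0$. Note that this step uses monotonicity \emph{only}; it does not require subtracting or cancelling $H$, which would be illegitimate here.

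The crux is the observation that $H-H=H+(-H)$ is self-conjugate, namely $-(H+(-H))=(-H)+H=H+(-H)$. Since conjugation is an order-reversing involution (a standard absolute fact), $K\preccurlyeq 0$ is equivalent to $-K\succcurlyeq 0$ for every game $K$; applying this to $K=H-H$ and using self-conjugacy converts $H-H\preccurlyeq 0$ into $H-H\succcurlyeq 0$. By antisymmetry of the order this forces $H-H=0$. Substituting back (equivalence is a congruence for $+$) gives $G+H-H=G+0=G$, so $G\prec 0$; but with the hypothesis $G\succ 0$ we then have both $G\succcurlyeq 0$ and $G\preccurlyeq 0$, hence $G=0$, contradicting $G\succ 0$.

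I expect the only genuine subtlety to be the opening move: recognising that monotonicity alone suffices to squeeze $G+H-H$ below $0$ and above $H-H$, without any cancellation law. Once that lower bound is in place, the self-conjugacy of $H-H$ does all the work, collapsing the inequality $H-H\preccurlyeq 0$ to the equality $H-H=0$ and thereby reducing $G+H-H$ to $G$ itself. Everything after the first step is routine, so the whole argument stays within the monoid structure as required.
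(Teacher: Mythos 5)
Your proof is correct, and it takes a genuinely different route from the paper's. The paper splits into cases according to the outcome of $H-H$ and, when $H-H\neq 0$, constructs an explicit distinguishing context ($*$ when $H-H\in\P$, and the adjoint-based position $X=\{0\,|\,\{\mathcal{F}^\circ(H-H)\,|\,0\}\}$ when $H-H\in\N$) in which Left, moving first, wins $H-H+X$ but loses $0+X$; the hypothesis $G\succ 0$ then lifts Left's first-player win to $G+H-H+X$, contradicting $G+H-H\preccurlyeq 0$. You avoid all of this: monotonicity plus transitivity squeeze out $H-H\preccurlyeq 0$, and the self-conjugacy of $H-H$ forces $H-H=0$, reducing everything to the trivial case (indeed you prove the slightly stronger statement $G+H-H\not\preccurlyeq 0$). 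The one point you should make explicit is the order-reversing property of conjugation, $K\preccurlyeq_{\U} 0\Leftrightarrow -K\succcurlyeq_{\U} 0$: it is true and absolute, but it is not among the facts the paper quotes, so it deserves its one-line justification --- $o(-J)$ is the conjugate outcome of $o(J)$, conjugation reverses the order on outcomes, $-(K+X)=(-K)+(-X)$ as forms, and $X\mapsto -X$ is a bijection of the universe onto itself by conjugate closure (Definition \ref{def:universe}); note that no cancellation is invoked, so the step is monoid-safe exactly as you claim. In exchange for quoting this extra absolute fact, your argument is shorter, dispenses with the adjoint machinery entirely, and works verbatim in any universe; the paper's argument is constructive, exhibiting a concrete context witnessing $G+H-H\not\preccurlyeq 0$, in the same style as the distinguishing-game techniques used elsewhere in the paper. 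Your key symmetry step also echoes the paper's own later remark, inside the proof of the main theorem, that $G^{L_1}-G^{L_1}$ ``cannot be less than zero because its tree is symmetric.''
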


\begin{proof}
If $H-H=0$, then $G\succ 0$ implies $G+H-H\succ 0$, and, of course, we have $G+H-H\not\prec 0$.\\

\noindent
If $H-H\neq0$ and $H-H\in \P$, then, playing first, Left has a winning move in $H-H+*$ (she removes the star). So, due to  $G\succ 0$, playing first, Left has a winning move in $G+H-H+*$. However, playing first, Left loses $0+*$. Hence, $G+H-H\not\prec 0$.\\

\noindent
If $H-H\neq0$ and $H-H\in \N$, then let $X=\{0\,|\,\{\mathcal{F}^\circ(H-H)\,|\,0\}\}$, where $\mathcal{F}^\circ(H-H)$ is the set of the adjoints of all followers of $H-H$. By Corollary \ref{ettinger2}, the reason for $H-H\neq0$ must be the existence of some $(H-H)^L$ such that there is no $(H-H)^{LR}\preccurlyeq 0$. Let us see that $(H-H)^L+X$ is a Left winning move in $H-H+X$. In fact, if Right answers in $X$, Left replies $(H-H)^L+\left((H-H)^{L}\right)^\circ$ and wins. On the other hand, if Right answers $(H-H)^{LR}+X$, we have two possibilities: 1) if $(H-H)^{LR}\in\L\cup\P$, Left replies $(H-H)^{LR}$ and wins; 2) if $(H-H)^{LR}\in\N\cup\R$, the reason for $(H-H)^{LR}\not\preccurlyeq0$ must be the existence of some $(H-H)^{LRL}$ such that there is no $(H-H)^{LRLR}\preccurlyeq 0$. Left replies $(H-H)^{LRL}+X$ and the process is repeated, thing that can't go on forever (we are considering short games). So, at some point, Right has to fall in the previous cases, and Left wins. Left, playing first has a winning move in $H-H+X$. Therefore,  due to  $G\succ 0$, playing first, Left has a winning move in $G+H-H+X$. However, playing first, Left loses $0+X$. Hence, $G+H-H\not\prec 0$.\\
\end{proof}

\clearpage
\begin{theorem}[Characterization of invertible elements of mis\`ere dicotic universe]\label{invertibles}
Let $G$ be a dicot in canonical form. Then, $G$ is invertible if and only if there is no $G'$, follower of $G$, such that $G'-G'\in\P$.
\end{theorem}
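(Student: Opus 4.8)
The statement splits into two implications, and both reduce to the behaviour of the self-conjugate games $G'-G'$. Since $-(G'-G')=G'-G'$, each such game has symmetric outcome, so $o(G'-G')\in\{\mathcal{N},\mathcal{P}\}$ for every follower $G'$; hence ``no follower $G'$ has $G'-G'\in\mathcal{P}$'' is the same as ``every follower $G'$ has $G'-G'\in\mathcal{N}$''. The implication $(\Leftarrow)$ is then exactly the McKay--Milley--Nowakowski result quoted in the Introduction: if every follower satisfies $G'-G'\in\mathcal{N}$, then $G-G=0$, so $G$ is invertible with inverse $-G$. Thus the whole content is the forward implication, which I would prove by contradiction.

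Assume some invertible canonical dicot has a follower lying in $\mathcal{P}$, and let $G$ be a simplest such form. As $G$ is invertible we have $G-G=0\in\mathcal{N}$, so $G$ itself is not the offending follower; using that conjugation preserves the outcome of $G'-G'$, I may pass to $-G$ if necessary and assume the bad follower sits below a \emph{left} option $G^{L_1}$. By minimality $G^{L_1}$ cannot itself be invertible, for otherwise it would be a simpler invertible canonical form still carrying a $\mathcal{P}$-follower. This non-invertibility of $G^{L_1}$, together with the Conjugate Property (Theorem~\ref{Conjugate}), is what gives the first strict inequality.

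The engine of the proof is a pair of reductions applied repeatedly. First, whenever $P\in\GL$, the position $P-G$ is a left option of $G-G=0$, so by Corollary~\ref{ettinger2} some right option of $P-G$ is $\preccurlyeq 0$; such a right option is either $P^R-G$ or $P-Q$ with $Q\in\GL$. The first alternative is impossible: since $G$ is invertible, cancellation (Theorem~\ref{lem:dis}) turns $P^R-G\preccurlyeq 0$ into $P^R\preccurlyeq G$, which makes the left option $P$ of the \emph{canonical} form $G$ reversible through $P^R$ -- a contradiction. Hence we are forced into $P-Q\preccurlyeq 0$ for some $Q\in\GL$, and the Conjugate Property upgrades this to a strict $P-Q\prec 0$, equality being barred because it would collapse $P$ to an invertible follower excluded by minimality. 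The mirror-image reduction, run inside the canonical form $-G$, starts from a right option $G-Q$ of $0$ and produces a strict $R-Q\succ 0$ with $R\in\GL$. Alternating the two reductions manufactures the \emph{carousel} $G^{L_1}-G^{L_2}\prec 0$, $G^{L_3}-G^{L_2}\succ 0$, $G^{L_3}-G^{L_4}\prec 0$, $G^{L_5}-G^{L_4}\succ 0,\ \dots$, each step forced by Corollary~\ref{ettinger2} and each wrong-component alternative killed by canonicity of $G$ or of $-G$.

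The one real difficulty -- the step the authors rightly flag -- is to show that the carousel never reuses a left option, so that it produces infinitely many distinct elements of the finite set $\GL$, the desired absurdity. Over a group one would simply add $G^{L_3}-G^{L_2}\succ 0$ to the conjugate $G^{L_2}-G^{L_1}\succ 0$, cancel $G^{L_2}$, and get $G^{L_3}-G^{L_1}\succ 0$, forbidding a return to $G^{L_1}$; but there is no group and no such cancellation. The surrogate is Lemma~\ref{invertiblelemma}. To forbid a repeat, say $G^{L_{2k+2}}=G^{L_{2j}}$, I would telescope $G^{L_{2k+1}}-G^{L_{2j}}=\sum_{m=2j}^{2k}\bigl(G^{L_{m+1}}-G^{L_m}\bigr)$, noting that every consecutive difference is $\succcurlyeq 0$ and at least one is strict (a conjugated $\prec$-step), so by strict additivity (Theorem~\ref{lem:oneadded}) the whole sum is $\succ 0$. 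But that sum is exactly $\bigl(G^{L_{2k+1}}-G^{L_{2j}}\bigr)+\bigl(H-H\bigr)$ with $H=\sum_{m=2j+1}^{2k}G^{L_m}$, so Lemma~\ref{invertiblelemma}, in its conjugate form ($X+H-H\succ 0$ forces $X\not\prec 0$), yields $G^{L_{2k+1}}-G^{L_{2j}}\not\prec 0$, contradicting the strict $\prec 0$ that the assumed repeat would create. Hence the left options are pairwise distinct, the carousel is infinite, and we reach the contradiction. The bulk of the remaining work is the bookkeeping that keeps every step strict and confined to $\GL$; the conceptual obstacle, and the place where mis\`ere-specific tools are indispensable, is precisely this Lemma-driven non-repetition argument.
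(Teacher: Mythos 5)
Your proposal reproduces the paper's own argument in all essentials: the backward direction is the McKay--Milley--Nowakowski criterion (the paper reproves it by a short induction using Corollary \ref{ettinger2}); the forward direction takes a simplest invertible counterexample, builds the same carousel $G^{L_1}-G^{L_2}\prec 0$, $G^{L_3}-G^{L_2}\succ 0$, $G^{L_3}-G^{L_4}\prec 0,\ldots$ inside $\GL$, and kills repetitions with Lemma \ref{invertiblelemma} applied to exactly the paper's regrouping of a telescoping disjunctive sum (your conjugate-form application anchored at the assumed repeat and the paper's direct application anchored at a known carousel fact are the same computation). So this is not a different route, and most of it is sound.

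However, one recurring step rests on an inference that is false in $\mathcal{D}^-$. To exclude Right's reply $P^R-G\preccurlyeq 0$ (for $P\in\GL$) you say cancellation gives $P^R\preccurlyeq G$, hence $P$ is reversible through $P^R$, ``a contradiction'' with canonicity. In the mis\`ere dicotic universe, reversibility of an option does not by itself contradict canonical form: by Theorem \ref{thm:atomicd}, an atomic-reversible option is not deleted but replaced by $*$, so a canonical form may legitimately contain the atomic-reversible option $*$ (this is also why the Substitution Theorem \ref{thm:substituted} is needed at all). The paper's actual argument is: since $G$ is canonical, the reversing option $P^R$ must be left-atomic, for otherwise Theorem \ref{thm:nonatomic} would rewrite $G$ with a different option set; by dicoticity a left-atomic follower is $\{\,\mid\,\}=0$; then Theorem \ref{thm:atomicd} forces $P$ to be the option $*$, which is invertible ($*+*=0$, Corollary \ref{starplusstar}), and this contradicts the non-invertibility of $P$ that minimality (for $G^{L_1}$) or the carousel facts (for later entries) have already established. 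You have the needed ingredient in hand---you do prove $G^{L_1}$ and its successors non-invertible---but you never invoke it at this step, and as written the exclusion of the $P^R-G$ alternative, which is used at every stage of the carousel, is a genuine gap; it is patched by the paper's atomic-reversibility analysis without altering your overall structure.
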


\begin{proof}
(:$\Leftarrow$) Suppose that there is no $G'$, follower of $G$, such that $G'-G'\in\P$. In particular, due to the fact that $G$ is a follower of $G$, $G-G\in\N$. On the other hand, without loss of generality, against $G^L-G$, Right can reply $G^L-G^L$. Because $G^L$ is in the conditions of the theorem, by induction, $G^L$ is invertible and, consequently, by conjugate property, $G^L-G^L=0$. The proviso and the common normal part of the Corollary \ref{ettinger2} are satisfied and, so, $G-G=0$, which means that $G$ is invertible.\\

\noindent
(:$\Rightarrow$) Suppose that there is $G$ invertible with a follower $G'$ such that $G'-G'\in\P$. Assume that $G$ is a simplest form in those conditions. By conjugate property, we have $G-G=0$ and, due to that, the common normal part must be satisfied. Because there is a follower $G'$ in the conditions of the theorem, $G$ is not $\{\,|\,\}$, and there are moves in $G-G=0$.\\

\noindent
Without loss of generality, suppose $G^{L_1}$ has a follower in the conditions of the\linebreak theorem, and consider the Left option $G^{L_1}-G$. By Corollary \ref{ettinger2}, Right must have an answer less or equal than zero.\\

\noindent
 $G^{L_1}-G^{L_1}$ cannot be $0$ because $G$ is a simplest invertible element in the conditions of the theorem. Hence, $G^{L_1}$ is not invertible. $G^{L_1}-G^{L_1}$ cannot be less than zero because its tree is symmetric. Therefore, $G^{L_1}-G^{L_1}$ cannot be a Right's answer less or equal than zero.\\

\noindent
On the other hand, we cannot have $G^{L_1R}-G\preccurlyeq 0$. If so, the fact that $G$ is invertible allows us to conclude that $G^{L_1R}\preccurlyeq G$ and, due to Theorem \ref{thm:atomicd} and the fact that $G$ is in canonical form, $G^{L_1}$ must be the atomic reversible option $*$, making $G^{L_1}$ invertible.\\

\noindent
Right's reply must be some $G^{L_1}-G^{L_2}\preccurlyeq 0$. More, we must have $G^{L_1}-G^{L_2}\prec 0$\linebreak because, if  $G^{L_1}-G^{L_2}= 0$, $G^{L_1}$ would be invertible. Observe that $G^{L_2}$ is not\linebreak invertible, otherwise $G^{L_1}\prec G^{L_2}$ and we would have a dominated option in a\linebreak canonical form. That is a first fact.\\

\noindent
Consider now $G-G^{L_2}$, a right option of $G-G$. Using similar arguments, there exists $G^{L_3}-G^{L_2}\succ 0$ ($G^{L_3}$ not invertible). That is a second fact.\\

\clearpage
\noindent
Against $G^{L_3}-G$, a left option of $G-G$, we cannot have $G^{L_3}-G^{L_1}\prec 0$. If so, $G^{L_3}-G^{L_1}+G^{L_1}-G^{L_2}\prec 0$, contradicting Lemma \ref{invertiblelemma}. In fact, against
$G^{L_k}-G$, a left option of $G-G$, we cannot have $G^{L_k}-G^{L_{k-i}}\prec 0$. If so, $$G^{L_k}-G^{L_{k-i}}+G^{L_{k-i}}-G^{L_{k-i+1}}+\ldots-G^{L_{k-1}}\prec 0,$$ contradicting Lemma \ref{invertiblelemma}.\\

\noindent
There exists $G^{L_3}-G^{L_4}\succ 0$ ($G^{L_4}$ not invertible). That is a third fact.\\

\noindent
Repeating the process, the existence of the following options is mandatory:

 $$G^{L_1}-G^{L_2}\prec 0,\,G^{L_3}-G^{L_2}\succ 0,\,G^{L_3}-G^{L_4}\prec 0,\,G^{L_5}-G^{L_4}\succ 0,\ldots$$

\noindent
But, due to the fact that we are considering short games, such an infinite sequence cannot exist. So, there is no invertible dicot $G$ in canonical form with a follower $G'$ such that $G'-G'\in\P$.
\end{proof}

\vspace{0.8cm}
\begin{observation}
Observe that the result works only for canonical forms. For example, $\{0,*,*2\,|\,0\}$ is invertible and $*2+*2\in\P$. However, $\{0,*,*2\,|\,0\}$ is not in canonical form; its canonical form is $\{0,*\,|\,0\}$.
\end{observation}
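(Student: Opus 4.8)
The observation packages three assertions about $\{0,*,*2\mid0\}$: that $*2+*2\in\P$, that its canonical form is $\{0,*\mid0\}$, and that it is invertible. Together these show that the canonical-form hypothesis in Theorem \ref{invertibles} cannot be dropped, since $\{0,*,*2\mid0\}$ is an invertible game possessing the follower $*2$ with $*2+*2\in\P$. The plan is to treat the three assertions in turn, leaning on the reduction theorems to pass to the canonical form and then on Theorem \ref{invertibles} itself to certify invertibility. The first assertion was already recorded at the opening of this section: $*2$ is its own conjugate, and $*2+*2$ is mis\`ere Nim on the heaps $(2,2)$, a previous-player win, so $*2+*2\in\P$; I would simply recall this.

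For the canonical form, first I would record the outcomes $o(0)=\mathscr N$, $o(*)=\mathscr P$, $o(*2)=\mathscr N$, and $o(\{0,*,*2\mid0\})=\mathscr L$. The key step is to show that the Left option $*2$ is reversible through its right option $*$, i.e.\ that $*\preccurlyeq\{0,*,*2\mid0\}$. I would verify this with Theorem \ref{unsobordinatedorder}: the proviso holds since $\mathscr L\ge\mathscr P$, and both clauses of the common normal part are satisfied by the option $0$ (present on both sides), using only $0\succcurlyeq0$. Since the left-option set of $*$ is $\{0\}$ and is nonempty, $*2$ is non-atomic-reversible with replacement set $\{0\}$, and Theorem \ref{thm:nonatomic} rewrites the game as $\{(\{0,*,*2\}\setminus\{*2\})\cup\{0\}\mid0\}=\{0,*\mid0\}$.

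I would then check that $\{0,*\mid0\}$ admits no further reduction: $0$ and $*$ are incomparable as Left options (each direction fails the proviso, since $\mathscr N$ and $\mathscr P$ are incomparable), so Theorem \ref{thm:domination} does not apply; the Left option $0$ and the Right option $0$ have no reversing options for lack of the relevant options; and the only remaining candidate, $*$ reversing through $0$, fails because $\{0,*\mid0\}\succcurlyeq0$ is false by Corollary \ref{ettinger} (its unique Right option $0$ has no Left option). Hence $\{0,*\mid0\}$ is in canonical form, establishing the second assertion.

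Finally, for invertibility I would apply Theorem \ref{invertibles} to $\{0,*\mid0\}$, whose followers are $0$, $*$, and itself. For $0$ and $*$ one has $0-0=0$ and $*-*=0$ by Corollary \ref{starplusstar}, both in $\mathscr N$. For the game itself I would compute the outcome of $\{0,*\mid0\}+\{0\mid0,*\}$ directly, noting $-\{0,*\mid0\}=\{0\mid0,*\}$: Left moving first wins by reaching $\{0,*\mid0\}$ (outcome $\mathscr L$), and Right moving first wins by reaching $\{0\mid0,*\}$ (outcome $\mathscr R$), so the sum lies in $\mathscr N$. Thus no follower $G'$ has $G'-G'\in\P$, and Theorem \ref{invertibles} yields that $\{0,*\mid0\}$ is invertible; since invertibility depends only on the equivalence class and $\{0,*,*2\mid0\}=\{0,*\mid0\}$, the form $\{0,*,*2\mid0\}$ is invertible as well. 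The main obstacle is the reversibility step: confirming $*\preccurlyeq\{0,*,*2\mid0\}$ via Theorem \ref{unsobordinatedorder} and then certifying that the reduced form is genuinely canonical are where the real (if routine) verification lies, after which the invertibility computation is a short outcome check.
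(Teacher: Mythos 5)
Your proposal is correct and follows exactly the route the paper implicitly relies on: the paper states this observation without proof, and your verification supplies the missing computations --- $*2+*2\in\P$, reversibility of the Left option $*2$ through $*$ (justified by $\{0,*,*2\mid0\}\succcurlyeq *$ via Theorem \ref{unsobordinatedorder}) followed by Theorem \ref{thm:nonatomic} to reach $\{0,*\mid0\}$, a canonicity check, and the outcome computation $o(\{0,*\mid0\}-\{0,*\mid0\})=\mathscr{N}$ feeding into Theorem \ref{invertibles}. All steps check out, including the final point that invertibility transfers from the canonical form to the equivalent form $\{0,*,*2\mid0\}$.
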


\vspace{0.8cm}
\begin{corollary}
Let $G$ be a dicot in canonical form. Then, if $*2$ is a follower of $G$, $G$ is not invertible.
\end{corollary}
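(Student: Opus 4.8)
The plan is to derive the corollary directly from the characterization just established in Theorem~\ref{invertibles}, used in its contrapositive form: a dicot in canonical form $G$ fails to be invertible as soon as \emph{some} follower $G'$ satisfies $G'-G'\in\P$. So the entire task reduces to exhibiting one such follower. The natural witness is the follower $*2$ itself, which is guaranteed to exist by hypothesis.

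First I would record that $*2=\{0,*\mid 0,*\}$ is impartial, so its conjugate is itself: $-*2=\{{-}0,{-}*\mid {-}0,{-}*\}=\{0,*\mid 0,*\}=*2$. Consequently $*2-*2=*2+*2$. Next I would invoke the fact, already noted in the opening discussion of this section, that $*2+*2\in\P$. Combining these two observations gives $*2-*2\in\P$. Since $*2$ is assumed to be a follower of $G$, the follower $G'=*2$ meets the hypothesis of Theorem~\ref{invertibles}, and therefore $G$ cannot be invertible.

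There is essentially no obstacle here: the whole argument is the single observation $-*2=*2$ together with the already-established membership $*2+*2\in\P$, after which Theorem~\ref{invertibles} does all the work. The only point requiring any care, were one to insist on full self-containment rather than citing the earlier remark, is the verification that $*2+*2$ is a previous-player win, which is a routine finite case analysis on the misère outcomes of the positions reachable from $*2+*2$.
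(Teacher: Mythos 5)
Your proof is correct and matches the paper's own argument, which simply cites Theorem~\ref{invertibles} together with the fact that $*2+*2\in\P$. Your only addition is spelling out the small detail that $-*2=*2$ (so $*2-*2=*2+*2$), which the paper leaves implicit.
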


\begin{proof}
Immediate consequence of Theorem \ref{invertibles} and the fact that $*2+*2\in\P$.
\end{proof}

\vspace{0.8cm}
\begin{corollary}
Let $G$ be an invertible dicot in canonical form. Then, all followers of $G$ are invertible.
\end{corollary}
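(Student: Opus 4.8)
The plan is to reduce the statement directly to Theorem \ref{invertibles}, which characterizes invertibility of a canonical form purely in terms of its followers. First I would record two elementary facts. The relation ``is a follower of'' is transitive: if $F$ is a follower of $G$ and $F'$ is a follower of $F$, then $F'$ is a follower of $G$. And any follower $F$ of a canonical form $G$ is itself in canonical form; this is immediate from the definition of canonical form, since that definition requires the reduction theorems to be inapplicable to $G$ \emph{and to all of its followers}, and the followers of $F$ form a subset of the followers of $G$.

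Granting these two observations, the argument is short. Let $F$ be an arbitrary follower of $G$. Since $G$ is invertible and in canonical form, Theorem \ref{invertibles} guarantees that no follower $G'$ of $G$ satisfies $G'-G'\in\P$. Now take any follower $F'$ of $F$; by transitivity $F'$ is also a follower of $G$, so $F'-F'\notin\P$. Thus $F$ is a canonical form having no follower $F'$ with $F'-F'\in\P$, and applying Theorem \ref{invertibles} to $F$ yields that $F$ is invertible.

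I do not expect a genuine obstacle here, since all of the difficulty has already been absorbed into Theorem \ref{invertibles}. The only point demanding a moment's care is the inheritance of the canonical-form property by followers, which must be invoked so that Theorem \ref{invertibles} is legitimately applicable to $F$ and not merely to the original game $G$; this is purely a matter of unwinding the definition of canonical form, and carries no new game-theoretic content.
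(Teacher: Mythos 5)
Your proof is correct and follows essentially the same route as the paper's, which disposes of the corollary in one line as an immediate consequence of Theorem \ref{invertibles} plus the transitivity of the follower relation. Your additional observation---that followers of a canonical form are themselves in canonical form, so that Theorem \ref{invertibles} legitimately applies to $F$---is a detail the paper leaves implicit, and spelling it out is a reasonable (indeed slightly more careful) rendering of the same argument.
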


\begin{proof}
Immediate consequence of Theorem \ref{invertibles} and the fact the followers of a follower of $G$ are also followers of $G$.
\end{proof}

\clearpage

\end{document}